
\documentclass[final]{siamltex}
\usepackage{amssymb, amsmath, mathrsfs}

\usepackage{newsymbol}
\let\ge       \undefined
\let\le       \undefined
\newsymbol\le        1336  \let\leq\le
\newsymbol\ge        133E  \let\geq\ge
\renewcommand{\le}{\leq}
\renewcommand{\ge}{\geq}

\newtheorem{remark}[theorem]{Remark}
\newtheorem{example}[theorem]{Example}
\newtheorem{claim}[theorem]{Claim}

\newcommand{\R}{{\mathbb{R}}}

\newcommand{\E}{{\mathbb{E}}}
\renewcommand{\P}{{\mathbb{P}}}

\newcommand{\F}{\mathscr{F}}

\newcommand{\g}{\gamma}
\newcommand{\s}{^{\ast}}

\newcommand{\om}{\omega}
\newcommand{\Om}{\Omega}

\newcommand{\n}{\Vert}
\newcommand{\embed}{\hookrightarrow}

\newcommand{\lb}{\langle}
\newcommand{\rb}{\rangle}
\renewcommand{\a}{\alpha}
\renewcommand{\b}{\beta}
\renewcommand{\d}{\delta}
\newcommand{\calL}{\mathscr{L}}
\newcommand{\limn}{\lim_{n\to\infty}}

\newcommand{\limj}{\lim_{j\to\infty}}
\renewcommand{\H}{\mathscr{H}}

\newcommand{\inv}[1]{\frac{1}{#1}}
\newcommand{\tinv}[1]{\tfrac{1}{#1}}

\newcommand{\beq}{\begin{equation}}
\newcommand{\eeq}{\end{equation}}
\newcommand{\bal}{\begin{aligned}}
\newcommand{\eal}{\end{aligned}}
\newcommand{\ben}{\begin{enumerate}}
\newcommand{\een}{\end{enumerate}}
\newcommand{\bit}{\begin{itemize}}
\newcommand{\eit}{\end{itemize}}

\title{Convergence rates of the splitting scheme for parabolic linear stochastic
Cauchy problems}

\author{Sonja Cox\!\!\and\,Jan van Neerven\thanks{S.G.Cox@tudelft.nl, J.M.A.M.vanNeerven@tudelft.nl, Delft Institute of Applied Mathematics; Delft University of Technology; P.O. Box 5031; 2600 GA Delft; The Netherlands. The second named author gratefully acknowledges support by VICI subsidy
639.033.604 of the Netherlands Organisation for Scientific Research (NWO).}}

\begin{document}

\maketitle

\begin{abstract}
We study the splitting scheme associated with the linear stochastic Cauchy
problem
$$\begin{aligned}
dU(t)& = AU(t)\,dt + dW(t),\quad t\in [0,T],  \\
U(0) & = x,
\end{aligned}
$$
Here $A$ is the generator of an analytic $C_0$-semigroup $S=\{S(t)\}_{t\ge 0}$
on a Banach space $E$ and 
$W=\{W(t)\}_{t\ge 0}$ is a Brownian motion with values in a fractional domain
space $E_\b$ associated with $A$.
We prove that if
$\a,\b,\g,\theta\ge 0$ are such that $\g+\theta<1$ and $(\a-\b+\theta)^+
+\g<\frac12$,
then the approximate solutions $U^{(n)}$ converge to the solution $U$ in
the space $C^\g([0,T];E_{\alpha})$, both in $L^p$-means and almost surely, with
rate $1/n^{\theta}$.
\end{abstract}

\begin{keywords}
Splitting scheme, stochastic evolution equations, 
analytic semigroups, $\g$-radon\-ifying operators, $\g$-boundedness, stochastic
convolutions, Lie-Trotter product formula.
\end{keywords}

\begin{AMS}
Primary: 35R15, 60H15; Secondary: 47D06, 60J35
\end{AMS}

\pagestyle{myheadings}
\thispagestyle{plain}
\markboth{S. COX AND J.M.A.M. VAN NEERVEN}{CONVERGENCE RATES OF SPLITTING-UP SCHEME FOR SACP}

\maketitle

\section{Introduction}

We are concerned with the convergence of the splitting scheme for the stochastic
linear Cauchy problem
\beq\label{sACP}\tag{SCP$_x$}
\left\{\begin{aligned}
dU(t) & = AU(t)\,dt + dW(t),\quad t\in [0,T],  \\
U(0) & = x,
\end{aligned}
\right.
\eeq
were $A$ is the generator of a $C_0$-semigroup $S=\{S(t)\}_{t\ge 0}$ on a 
real Banach space $E$,
$W=\{W(t)\}_{t\ge 0}$ is an $E$-valued Brownian motion on a probability space
$(\Om,\P)$, and $x\in E$ is an initial value which is kept fixed throughout the
paper. The concept of the scheme is to alternately add an increment of the
Brownian motion $W$ and run the semigroup $S$ on a time interval of equal
length. Taking time steps $\Delta t^{(n)} := T/n$ and writing $t_j^{(n)} :=
jT/n$ and $\Delta W_j^{(n)}:= W(t_j^{(n)}) - W(t_{j-1}^{(n)})$, this generates a
finite sequence $\{U_x^{(n)}(t_j^{(n)})\}_{j=0}^n$ defined by
$$ 
\bal
U_x^{(n)}(t_0^{(n)}) &:= x, \\  
U_x^{(n)}(t_j^{(n)}) &:= S(\Delta t^{(n)})(U_x^{(n)}(t_{j-1}^{(n)})
+ \Delta W_j^{(n)}), \quad j=1,\dots,n.
\eal
$$
We have the explicit formula
$$
U_x^{(n)}(t_j^{(n)}) 
= S(t_j^{(n)})x + \sum_{i=1}^j S(t_{j-i+1}^{(n)})\Delta W_i^{(n)}, \quad
j=0,\dots,n. 
$$
Assuming the existence of a unique solution $U_x$ of the problem \eqref{sACP}
(see Proposition \ref{prop:RPhi} below),
we may ask for conditions ensuring the convergence of $U_x^{(n)}(T)$ to $U_x(T)$
in $L^p(\Om;E)$ 
for some (all) $1\le p<\infty$ or even in $E$ almost surely.
In order to describe our approach we start by noting that 
each $U_x^{(n)}(t_j^{(n)})$ can be represented as a stochastic integral
of the discretised function
$$ S^{(n)}(t):= \sum_{j=0}^n 1_{I_j^{(n)}}\otimes S(t_j^{(n)}), \quad t\in
[0,T],$$
where $I_0^{(n)} = \{0\}$ and $I_j^{(n)} = (t_{j-1}^{(n)}, t_j^{(n)}]$ for
$j=1,\dots,n$. Indeed, defining the stochastic integral of a step function in
the obvious way, 
we have
\begin{equation}\label{eq:SI1}
U_x^{(n)}(t_j^{(n)}) = S^{(n)}(t_j^{(n)})x + \int_0^{t_j^{(n)}}
S^{(n)}(t_j^{(n)} -s)\,dW(s),\quad j=0,\dots,n.
\end{equation}
On the other hand, the exact solution of \eqref{sACP}, if it exists, is given by
the stochastic convolution integral
\beq\label{eq:SI2} 
U_x(t):= S(t)x + \int_0^{t} S(t-s)\,dW(s), \quad t\in [0,T].
\eeq
For the precise definition of the stochastic integral we refer to Section
\ref{sec:main}. 
Comparing \eqref{eq:SI1} and \eqref{eq:SI2} we see that the problem of
convergence of the splitting scheme is really a problem of convergence of
`Riemann sums' for stochastic integrals. Let us henceforth put
$$
U_x^{(n)}(t)  := S^{(n)}(t)x + \int_0^{t} S^{(n)}(t-s)\,dW(s), \quad t\in [0,T].
$$
The second formula interpolates the data in the identity \eqref{eq:SI1}
in a way that makes them easily accessible with continuous time techniques;
other possible interpolations, such as piecewise linear interpolation, do not
have this advantage. Needless to say, in Theorems \ref{thm1} and \ref{thm2b}
below
we are primarily interested in what happens at the time points $t=t_j^{(n)}$.
From $S^{(n)}(t_j^{(n)})x = S(t_j^{(n)})x$ we see that
$$ U_x^{(n)}(t_j^{(n)}) - U_x(t_j^{(n)}) = U_0^{(n)}(t_j^{(n)}) -
U_0(t_j^{(n)})$$ for all $x\in E$, and therefore it suffices to analyse
convergence of the splitting scheme with initial value $0$. In what follows, in
order to simplify notations we shall write $U(t) := U_0(t)$ and $U^{(n)}(t):=
U_0^{(n)}(t)$.  

Our first result extends and simplifies previous work by K\"uhne\-mund and the
second-named author \cite[Theorems 4.3 and 5.2]{KueNee}.

\begin{theorem}\label{thm1}
Each of the conditions {\rm(a)} and {\rm(b)} below guarantees that the problem
\beq\label{sACP0}\tag{SCP$_0$}
\left\{\begin{aligned}
dU(t) & = AU(t)\,dt + dW(t),\quad t\in [0,T],  \\
U(0) & = 0,
\end{aligned}
\right.
\eeq
admits a unique 
solution $U= \{U(t)\}_{t\in [0,T]}$ which satisfies 
$$\limn \Big(\sup_{t\in [0,T]} \E \n U^{(n)}(t) - U(t)\n^p\Big) = 0$$
for all $1\le p<\infty$:
\ben
\item[(a)] $E$ has type $2$;
\item[(b)] $S$ restricts to a $C_0$-semigroup on the reproducing kernel Hilbert
space associated with $W$.
\een
\end{theorem}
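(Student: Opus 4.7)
My plan is to express the difference as a single stochastic integral,
\[
U^{(n)}(t) - U(t) \;=\; \int_0^t \bigl(S^{(n)}(t-s)-S(t-s)\bigr)\,dW(s),
\]
and to estimate it using Proposition~\ref{prop:RPhi}, which supplies existence of $U$ under either assumption and the It\^o-type $\gamma$-isomorphism controlling $L^2(\Om;E)$-norms of stochastic integrals of deterministic operator-valued integrands by $\gamma$-radonifying norms. Since the integrand is deterministic the integral is an $E$-valued Gaussian, so by Fernique's theorem all $L^p(\Om;E)$-moments with $1\le p<\infty$ are equivalent to the $L^2$-moment and it suffices to work with $p=2$.

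For $p=2$ the task reduces to proving
\[
\limn\,\sup_{t\in[0,T]}\,\bigl\n(S(t-\cdot)-S^{(n)}(t-\cdot))\circ i_H\bigr\n_{\gamma(L^2(0,t;H),E)}=0,
\]
where $i_H\colon H\embed E$ is the inclusion of the reproducing kernel Hilbert space of $W$, itself $\gamma$-radonifying since $W$ is $E$-valued. Two basic inputs feed the estimate: strong-operator convergence $S^{(n)}(t-s)\to S(t-s)$ for every non-grid $s$ (a consequence of strong continuity of $S$), and the uniform bound $M:=\sup_{r\in[0,T]}\n S(r)\n_{\mathcal{L}(E)}<\infty$.

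In case~(a), the type-$2$ property of $E$ furnishes the pointwise domination
\[
\bigl\n\Psi\circ i_H\bigr\n_{\gamma(L^2(0,t;H),E)}^{2}\;\le\; C\int_0^t \bigl\n\Psi(s)\circ i_H\bigr\n_{\gamma(H,E)}^{2}\,ds,
\]
after which the conclusion follows by Lebesgue dominated convergence in $s$: at each non-grid $s$, strong-operator convergence of $\Psi_n(s):=S(t-s)-S^{(n)}(t-s)$ to $0$ combined with the left-ideal property of $\gamma$-radonifying operators and the $\gamma$-version of DCT against the fixed $i_H\in\gamma(H,E)$ forces $\n\Psi_n(s)\circ i_H\n_{\gamma(H,E)}\to 0$, with uniform majorant $2M\n i_H\n_{\gamma(H,E)}$. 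Uniformity in $t\in[0,T]$ is immediate from monotonicity.

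Case~(b) is the delicate one and I expect it to be the main obstacle. Writing $S_H$ for the restricted $C_0$-semigroup on $H$, the identities $S(t-s)\circ i_H=i_H\circ S_H(t-s)$ and the analogous one for $S^{(n)}$ allow the $\gamma$-norm to be rewritten as $\n i_H\circ R_n\n_{\gamma(L^2(0,t;H),E)}$, where $R_n\in\mathcal{L}(L^2(0,t;H),H)$ is the convolution operator with kernel $S_H(t-\cdot)-S_H^{(n)}(t-\cdot)$. These $R_n$ are uniformly bounded, and reflexivity of $H$ makes $S_H^\ast$ a $C_0$-semigroup, so a Lebesgue-DCT argument yields that both $R_n$ and $R_n^\ast$ tend to $0$ in the strong operator topology. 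The core difficulty is to promote this strong-operator decay to $\gamma$-norm decay of $i_H\circ R_n$; the resolution is that the fixed factor $i_H\in\gamma(H,E)$ provides the Gaussian dominant required to activate the $\gamma$-analogue of dominated convergence, playing exactly the role that the pointwise type-$2$ estimate played in case~(a).
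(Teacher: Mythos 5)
Your proposal is correct and follows essentially the same route as the paper: reduce to $p=2$ via Gaussianity, reduce $\sup_t$ to the $\gamma(L^2(0,T;H),E)$-norm by monotonicity/covariance domination, and then for (a) use the type-2 embedding $L^2(0,T;\gamma(H,E))\embed\gamma(L^2(0,T;H),E)$ together with dominated convergence, while for (b) factor through $i_H\circ S_H(\cdot)$ and invoke the strong convergence of the adjoints (the content of \cite[Proposition 2.4]{NVW07a}). The paper packages the equivalence-of-formulations step as Theorem~\ref{thm:c-bis} and the two cases as Theorems~\ref{thm:b} and~\ref{thm:a}, but the underlying ideas match yours.
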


The class of spaces satisfying condition (a) includes all Hilbert spaces and the
spaces $L^p(\mu)$ for $2\le p<\infty$. It follows from the results in
\cite{MaaNee} that condition (b) is satisfied if the transition semigroup
associated with the solution process is analytic. 

The main result of this article, Theorem \ref{thm2b} below, concerns actual
convergence rates for the splitting scheme in the case that the semigroup $S$ is
{\em analytic} on $E$.  The convergence is considered in suitable H\"older norms
in space and time, with explicit bounds for the convergence rate. 

We denote by $E_\a$ the fractional power space of exponent $\a\ge 0$ associated
with $A$ (see Section \ref{sec:analytic} for more details).

\begin{theorem}\label{thm2b}
Suppose that the semigroup $S$ is analytic on $E$ and that $W$ is a Brownian
motion in $E_\b$ for some $\b\ge 0$. 
Then the problem \eqref{sACP0} admits a unique 
solution $U= \{U(t)\}_{t\in [0,T]}$, and for all
$\a,\g,\theta\ge 0$ such that $\g+\theta<1$ and $(\a-\b+\theta)^+ + \g <
\frac12$ one has the estimate
$$\begin{aligned}
\big(\E\big\Vert U^{(n)} - U \big\Vert^p_{C^\g([0,T];E_{\alpha})}\big)^\frac1p
\lesssim \frac1{n^{\theta}},
\quad 1\le p<\infty,
\end{aligned}$$
with implied constant independent of $n\ge 1$. 
\end{theorem}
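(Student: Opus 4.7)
The plan is to write the splitting error $U^{(n)}(t) - U(t)$ as a stochastic convolution with a controllable operator-valued kernel, to bound its $\g$-radonifying norm using analytic-semigroup estimates, and to promote the resulting pointwise moment bounds to $C^\g([0,T]; E_\a)$-bounds via a Kolmogorov continuity argument. Existence of $U$ itself is the limiting case $\theta = 0$ of this moment estimate (giving $(\a-\b)^+ < \tfrac12$), so it suffices to focus on the difference $U^{(n)} - U$.

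For the pointwise bound, observe that if $t - s \in I_j^{(n)}$ then
\[
S^{(n)}(t-s) - S(t-s) = S(t-s)\bigl[S(t_j^{(n)} - (t-s)) - I\bigr],
\]
so that the analytic-semigroup estimates $\n S(h) - I\n_{\mathscr{L}(E_\b, E_{\b-\theta})} \lesssim h^\theta$ and $\n S(r)\n_{\mathscr{L}(E_{\b-\theta}, E_\a)} \lesssim r^{-(\a-\b+\theta)^+}$ combine to
\[
\n S^{(n)}(t-s) - S(t-s)\n_{\mathscr{L}(E_\b, E_\a)} \lesssim n^{-\theta} (t-s)^{-(\a-\b+\theta)^+}.
\]
Feeding this kernel into the Banach-valued stochastic-integration inequality and using the hypothesis $(\a-\b+\theta)^+ < \tfrac12$ yields $(\E\n U^{(n)}(t) - U(t)\n_{E_\a}^p)^{1/p} \lesssim n^{-\theta}$ uniformly in $t \in [0,T]$.

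For the H\"older bound, I split, for $0 \le s < t \le T$,
\[
(U^{(n)} - U)(t) - (U^{(n)} - U)(s) = \int_s^t \bigl[S^{(n)}(t - r) - S(t - r)\bigr]\,dW(r) + \int_0^s \Delta^{(n)}(t,s,r)\,dW(r),
\]
where $\Delta^{(n)}(t,s,r) := [S^{(n)}(t-r) - S(t-r)] - [S^{(n)}(s-r) - S(s-r)]$. In the second integrand I insert the factor $S(t-s) - I$ (which maps $E_\b \to E_{\b-\g}$ with norm $\lesssim |t-s|^\g$) to extract $|t-s|^\g$ at the cost of raising the singularity in $(s-r)$ by $\g$; the hypothesis $(\a-\b+\theta)^+ + \g < \tfrac12$ then preserves square-integrability, while $\g + \theta < 1$ keeps the composite semigroup-H\"older exponent inside the admissible range $[0,1]$ when $\theta$ and $\g$ are extracted from the same semigroup difference. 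A bound of the form
\[
\bigl(\E\n (U^{(n)} - U)(t) - (U^{(n)} - U)(s)\n_{E_\a}^p\bigr)^{1/p} \lesssim n^{-\theta}\, |t-s|^{\g + \e},
\]
for some small $\e > 0$ (available by slightly adjusting $\a$ or $\theta$ within the open constraints), then promotes to the stated $C^\g([0,T]; E_\a)$-norm bound via the Kolmogorov continuity theorem for Banach-space-valued processes.

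The main obstacle is the kernel estimate for the second integral: when $t$ and $s$ lie in different discretisation intervals $I_j^{(n)}$ the clean factorisation used in the pointwise estimate does not apply directly to $S^{(n)}(t-r) - S^{(n)}(s-r)$, and one must partition the $r$-range according to where $t - r$ and $s - r$ sit relative to the grid. Simultaneously tracking the $n^{-\theta}$ approximation gain and the $|t-s|^\g$ H\"older gain, while verifying that each piece absorbs no more than the total loss permitted by $(\a - \b + \theta)^+ + \g < \tfrac12$ and $\g + \theta < 1$, is where all four exponents interact and where the technical heart of the proof lies.
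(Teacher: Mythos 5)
Your overall architecture — pointwise moment bounds via a factorisation of $\Phi^{(n)}-\Phi$, a time-increment estimate, and promotion to $C^\gamma$ via Kolmogorov continuity — does match the paper, and the factorisation $S^{(n)}(r)-S(r)=S(r)\big(S(n^{-1}\lceil nr\rceil - r)-I\big)$ is exactly the paper's identity \eqref{eq:fact}. However, there is a genuine gap at the very first step: in a general Banach space one cannot pass from the \emph{operator-norm} bound $\|\Phi^{(n)}(r)-\Phi(r)\|_{\mathscr{L}(H,E_\alpha)}\lesssim n^{-\theta}r^{-(\alpha-\beta+\theta)^+}$ to a bound on $\|R_{\Phi^{(n)}-\Phi}\|_{\gamma(L^2(0,t;H),E_\alpha)}$. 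There is no ``Banach-valued stochastic-integration inequality'' that accepts a scalar kernel bound; and Proposition~\ref{prop:C1-derivative} is inapplicable because $\Phi^{(n)}-\Phi$ is not $C^1$. The device the paper actually needs is $\gamma$-boundedness: it rewrites the kernel as $s^\delta S(s)\circ\big(S(n^{-1}\lceil ns\rceil-s)-I\big)\circ s^{-\delta}i$, proves $\gamma$-boundedness of the families $\{s^\delta S(s)\}$ and $\{S(h)-I:\,h\le n^{-1}\}$ (Lemma~\ref{lem:analyticRbound}), and applies the Kalton--Weis multiplier theorem (Proposition~\ref{prop:KW}) to the $\gamma(L^2(0,t;H),E_\beta)$-norm of the base kernel $s\mapsto s^{-\delta}i$ (Proposition~\ref{prop:gR}). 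This is the central technical mechanism of the paper, announced in the introduction as the reason It\^o/BDG/factorisation tools are unavailable, and it is entirely absent from your argument.

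The second gap is the one you flag yourself but do not resolve: the term $\int_0^s\big[\Phi^{(n)}(t-r)-\Phi^{(n)}(s-r)\big]\,dW(r)$ cannot be handled by inserting $S(t-s)-I$, since $S^{(n)}$ has no semigroup property. The paper's resolution splits the regimes $t-s\ge (2n)^{-1}$ (where the triangle inequality plus Theorem~\ref{thm:c} already gives the H\"older bound, \eqref{eq:eg2}) and $t-s<(2n)^{-1}$ (the Claim). In the latter regime $\lceil n(t-r)\rceil-\lceil n(s-r)\rceil\in\{0,1\}$, so the integrand vanishes on a set $B_0$ and equals $S^{(n)}(s-r)(S(n^{-1})-I)i$ on the complement $B_1$; the $n^{-\theta}(t-s)^\gamma$ gain is then recovered by trading $\gamma$ between the $L^2(B_1)$-norm of a weight $r\mapsto(s-r)^{-\eta}$ (using $|B_1\cap\text{each grid cell}|\le t-s$, giving $\|\,\cdot\,\|_{L^2(B_1)}\lesssim n^\gamma(t-s)^\gamma$) and the $\gamma$-bound of $\{S(h)-I\}$ (giving $n^{-\theta-\gamma}$), under the constraint $\theta+\gamma<1$. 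This is precisely the combinatorial/analytic step you identify as ``the technical heart,'' and it is missing, not merely incomplete.

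Minor point: your strategy of obtaining $|t-s|^{\gamma+\varepsilon}$ by perturbing the exponents within the open constraints, and then applying Kolmogorov, is a legitimate variant of the paper's last step (which instead fixes $\gamma$, applies Kolmogorov to get $C^{\gamma'-1/q}$ for $\gamma'<\gamma$ and $q$ large, and renames $\bar\gamma<\gamma$ at the end); both handle the Kolmogorov loss correctly once the increment estimate is in hand.
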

 
By a Borel-Cantelli argument, this result implies the almost sure convergence of
$U^{(n)}$ to $U$ in $C^\g([0,T];E_{\alpha})$ with the same rates.

The proof of Theorem \ref{thm2b} heavily relies
on the theory of $\g$-radonifying operators and $\g$-bounded\-ness techniques.
Standard techniques from stochastic analysis which are
commonly used in connection with the problems considered here, such as It\^o's
formula and the Burkholder-Davis-Gundy inequalities, are unavailable in the
present general framework (unless one makes additional assumptions on $E$, such
as martingale type $2$ or the UMD property). We also cannot use 
factorisation techniques (as introduced
by Da Prato, Kwapie\'n and Zabczyk \cite{DKZ}), the reason being that the
semigroup property on which this technique relies fails for the discretised
semigroup $S^{(n)}$.

\begin{example}
\rm Theorem \ref{thm2b} may be applied to second order elliptic operators 
of the form 
$$ Af(x) = \sum_{i,j=1}^d a_{ij}(x)\partial_{ij}f(x)
+  \sum_{i=1}^d b_{i}(x)\partial_i f(x) + c(x)f(x).
$$
Under minor regularity assumptions on the coefficients $a_{ij}=a_{ji}$, $b_i$ and $c$, 
such operators generate analytic semigroups on
$E=L^q(\R^d)$ with $1<p<\infty$ (see \cite[Chapter 3]{Lun}) and one
has $E_\a = H^{2\a,q}(\R^d)$ for all $0<\a<\frac12$. Applying Theorem
\ref{thm2b} (with $\b=0$), we obtain convergence of the splitting scheme in the space
$C^\g([0,T];H^{2\a,q}(\R^d))$ for any $\g\ge  0$ such that $0<\a+\g<\frac12$. By
the Sobolev embedding $\smash{ H^{2\a,q}(\R^d)\embed C_0^{2\a-{d}/q}(\R^d)}$
 \cite[Section 2.8]{Tri}, this implies the convergence of the splitting scheme
in the mixed H\"older space $C^\g([0,T];C_0^{2\a-{d}/q}(\R^d))$. As a
consequence, we obtain convergence in the mixed H\"older space
$$C^\g([0,T];C_0^{2\d}(\R^d)), \quad \g,\d\ge 0, \ \ \g+\d<\tfrac12,$$
with rate $1/n^\theta$ for any $\theta<\tfrac12-\g-\d$; this rate improves when
the noise is more regular. 
Similar results can be obtained for elliptic operators on smooth domains 
$D\subseteq\R^d$
subject to various types of boundary conditions (as long as they generate an 
analytic semigroup on $L^q(D)$).
\end{example}

For semi-linear (Stratonovich type) SPDEs governed by second order elliptic
operators on $\R^d$ and driven by multiplicative noise, convergence in $E=
L^2(\R^d)$ of splitting schemes like the one considered here has been proved by
various authors \cite{BGR1, BGR2, FloLeG, GyoKry, Nag}. Using techniques from
PDE and stochastic analysis it is shown by Gy\"ongy and Krylov \cite{GyoKry}
that, with respect to the norm of $E=L^2(\R^d)$, for finite-dimensional noise
and with sufficiently smooth coefficients one obtains the maximal estimate
$$\big(\E\sup_{t\in [0,T]}\n U^{(n)}- U\n_{L^2(\R^d)}^p\big)^\frac1p \lesssim
\frac1{n}, \quad 1\le p<\infty.$$
Our result is valid in the full scale of spaces $L^q(\R^d)$ and
infinite-dimensional noise, with a rate which (for smooth enough noise) is only
slightly worse that $1/n$ and is independent of $1\le q<\infty$. More precisely, 
for $\b\ge \frac12+\a$ and taking $\g=0$ we obtain uniform convergence with rate 
$1/n^\theta$ for any $0\le\theta<1$. In addition to
that we obtain H\"older regularity in both space and time. 
On the other hand, as we already mentioned, Gy\"ongy and Krylov
\cite{GyoKry} consider the semi-linear case and multiplicative noise.

The next example shows  that by working in suitable fractional extrapolation
spaces (this technique is explained in \cite{DNW}; see also \cite{Brz95,
Brz97}), the 
assumption that $W$ is a Brownian motion can be weakened to $W$ being a
cylindrical Brownian motion (see, e.g., \cite{NVW07a, NeeWei05a} for the
definition). 

\begin{example}\label{ex:stoch-heat-intro}
\rm The stochastic heat equation on the unit interval $[0,1]$ with Dirichlet
boundary conditions driven by space-time white noise can be put into the present
framework by taking for $E$ the extrapolation space $F_\rho$ with $F = L^q(0,1)$
and $\rho<-\frac14$. As we shall explain in Example \ref{ex:stoch-heat}, this
entails the convergence of the splitting scheme in the mixed H\"older space
$$C^\g([0,T];C_0^{2\d}[0,1]),\quad \g,\d\ge 0, \ \ \g+\d<\tfrac14,$$ 
with rate $1/n^\theta$ for any $\theta<\frac14-\g-\d$.
\end{example}

It is shown in \cite{DaGai01} that any approximation scheme for a one-dimensional
stochastic heat equation with additive space-time white noise which incorporates
the contributions of the noise only by means of the terms $\Delta W_k$,
$k=1,\ldots,n$, cannot have a convergence rate better than $1/n^{\frac{1}{4}}$. 
This shows that the exponent $\frac14$ in Example \ref{ex:stoch-heat-intro} 
is the best possible.

The field of
numerical approximation of stochastic partial differential equations (SPDEs) is
a very active one; an up-to-date overview of the available results can be found in
\cite{JenKloe09a}. In \cite{Gyo99} convergence rates are considered for various
approximations schemes in space and time of a quasi-linear parabolic SPDE driven by 
white noise. The authors obtain a convergence rate $1/n^{\frac{1}{4}}$ in $L^p$ for an
implicit Euler scheme. In \cite{PetSig05} convergence in probability 
is proved (without rates) for the same SPDE with state-dependent dispersion. Rates for
path-wise convergence are given for quasi-linear parabolic SPDEs 
in \cite{GyoMil09, Jen09, LordRou04}, albeit only for colored noise. It seems likely that the methods of this 
paper can be extended to the implicit Euler scheme and to semi-linear problems with 
multiplicative noise; we plan to address such extensions in a future paper.

The paper is organised as follows. 
Section \ref{sec:prelim} presents some preliminary material about spaces of
$\g$-radonifying operators. The proofs of Theorems \ref{thm1} and \ref{thm2b}
are presented in Sections \ref{sec:main} (Theorems \ref{thm:b}, \ref{thm:a}) and
\ref{sec:analytic} (Theorem \ref{thm:unif}), respectively. 

It is known that each of the conditions in Theorems \ref{thm1}   and
\ref{thm2b} implies that the 
solution process $U$ has continuous trajectories.
In the final Section \ref{sec:example} we present an example which shows that
without any additional assumptions on the space $E$ and/or the semigroup $S$ the
splitting scheme may fail to converge even if a solution $U$ with continuous
trajectories exists.

\section{Preliminaries}\label{sec:prelim}

Let $\{\g_j\}_{j\ge 1}$ be a sequence of independent standard Gaussian random
variables on a probability space $(\Om,\P)$, let $\H$ be a real Hilbert space
(later we shall take $\H = L^2(0,T;H)$, where $H$ is another real Hilbert space)
and $E$ a real Banach space. A bounded operator $R$ from $\H$ to $E$
is called {\em $\g$-summing} if 
$$ \n R \n_{\g_{\infty}(\H,E)}^2 := \sup_h \E \Big\n \sum_{j=1}^k \g_j R h_j \Big\n^2,$$
is finite, where the supremum is taken over all finite orthonormal systems 
$h = \{h_j\}_{j=1}^k$ in $\H$. It can be shown that $\n\cdot \n_{\g_{\infty}(\H,E)}$ is indeed a norm which turns the space of $\g$-summing operators into a Banach space.

The space $\g(\H,E)$ of $\g$-\emph{radonifying} operators is defined to be the closure of the finite rank operators under the norm $\n\cdot\n_{\g_{\infty}}$; it is a closed subspace of $\g_{\infty}(\H,E)$. A celebrated result of Kwapie\'n and Hoffmann-J{\o}rgensen \cite{HofJor, Kwa}
implies that  if $E$ does not contain a closed subspace isomorphic to $c_0$ then $\g(\H,E)=\g_\infty(\H,E)$.

Since convergence in $\g(\H,E)$ implies convergence in $\calL(\H,E)$, every
operator $R\in \g(\H,E)$, being the operator norm limit of a sequence of finite
rank operators from $\H$ to $E$,  is compact.

If $\H$ is separable with orthonormal basis $\{h_j\}_{j\ge 1}$, then an operator
$R:\H\to E$ is $\g$-radonifying if and only if the Gaussian sum
$ \sum_{j\ge 1} \g_j Rh_j $ converges in $L^2(\Om;E)$, and in this situation we
have
$$ \n R\n_{\g(\H,E)}^2 =  \E \Big\n\sum_{j\ge 1} \g_j Rh_j \Big\n^2.$$
The general case may be reduced to the separable case by observing that for any
$R\in \g(\H,E)$ there exists a separable closed subspace $\H_R$ of $\H$ such
that $R$ vanishes on the orthogonal complement $\H_R^\perp$. 

If $R\in \g(H,E)$ is given and $\{h_j\}_{j\ge 1}$ is an orthonormal basis for
$\H_R$, 
the sum $\sum_{j\ge 1} \g_j Rh_j$ defines a centred $E$-valued Gaussian random
variable. Its distribution
$\mu$ is a centred Gaussian Radon measure on $E$ whose covariance operator
equals $RR\s$. We will refer to $\mu$ as the Gaussian measure {\em associated
with} $R$.
In the reverse direction, if $Y$ is a centred $E$-valued Gaussian 
random variable with reproducing kernel
Hilbert space $\H$, then $\H$ is separable, the natural inclusion mapping
$i:\H\embed E$ is $\g$-radonifying, and we have
$$ \n i\n_{\g(\H,E)}^2 = \E \n Y\n^2.$$

Below we shall need the following simple continuity result.

\begin{proposition}\label{prop:g-cont} 
Let $(X,d)$ be a metric space and let $V:X\to \calL(E,F)$ be
strongly continuous. Then for all $R\in
\g(\H,E)$ the function $VR: X\to \g(\H,F)$, $$(VR)(\xi):= V(\xi)R, \qquad \xi\in
X,$$ is continuous.
\end{proposition}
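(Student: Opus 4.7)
The plan is a standard two-step approximation argument: first handle finite rank $R$, where $\gamma$-radonifying continuity reduces to strong continuity of $V$ on a finite set of vectors, and then pass to general $R \in \gamma(\H,E)$ by density, using the right ideal property of $\gamma(\H,E)$ to control the error in terms of $\|V(\xi)\|_{\calL(E,F)}$.

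First I would fix $\xi_0 \in X$ and a sequence $\xi_n \to \xi_0$, and reduce to proving $\|V(\xi_n)R - V(\xi_0)R\|_{\g(\H,F)} \to 0$. For a finite rank operator $R_0 \in \g(\H,E)$ one can write $R_0 = \sum_{j=1}^k h_j \otimes x_j$ with $\{h_j\}_{j=1}^k$ orthonormal in $\H_{R_0}$ and $x_j = R_0 h_j \in E$. Then
$$\n V(\xi_n) R_0 - V(\xi_0) R_0 \n_{\g(\H,F)}^2 = \E \Big\n \sum_{j=1}^k \g_j \bigl(V(\xi_n)-V(\xi_0)\bigr) x_j \Big\n^2,$$
and since the sum is finite and $(V(\xi_n) - V(\xi_0))x_j \to 0$ in $F$ for each $j$ by strong continuity, the right-hand side tends to $0$.

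For general $R \in \g(\H,E)$, given $\e > 0$, I would pick a finite rank $R_0$ with $\n R - R_0\n_{\g(\H,E)} < \e$ and split
$$V(\xi_n) R - V(\xi_0)R = V(\xi_n)(R-R_0) + \bigl(V(\xi_n) - V(\xi_0)\bigr)R_0 + V(\xi_0)(R_0 - R).$$
By the right ideal property of $\g$-radonifying operators the outer terms are bounded by $\n V(\xi_n)\n_{\calL(E,F)} \e$ and $\n V(\xi_0)\n_{\calL(E,F)} \e$ respectively, while the middle term tends to $0$ by the finite rank case.

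The main obstacle — really the only subtle point — is ensuring that $\sup_n \n V(\xi_n)\n_{\calL(E,F)}$ is finite. This follows from the uniform boundedness principle: for every $x \in E$ the sequence $V(\xi_n)x$ converges in $F$ to $V(\xi_0)x$ and hence is bounded, so $\{V(\xi_n)\}_n$ is pointwise bounded on $E$, and therefore uniformly bounded in operator norm. Combining the three estimates and letting $n \to \infty$ first and then $\e \da 0$ yields $\limsup_n \n V(\xi_n) R - V(\xi_0) R\n_{\g(\H,F)} \le (\sup_n \n V(\xi_n)\n + \n V(\xi_0)\n)\e$ for every $\e > 0$, which completes the proof.
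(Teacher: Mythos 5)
Your proof is correct and follows essentially the same two-step approximation argument as the paper: verify continuity directly for finite rank $R$ using strong continuity of $V$, then pass to general $R$ by density together with the ideal bound $\|V(\xi)R\|_{\g(\H,F)}\le\|V(\xi)\|\,\|R\|_{\g(\H,E)}$. You merely spell out in more detail what the paper leaves implicit, namely the three-term split and the appeal to uniform boundedness to control $\sup_n\|V(\xi_n)\|$.
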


\begin{proof} 
Suppose first that $R$ is a finite rank operator, say $R = \sum_{j=1}^k
h_j\otimes x_j$ with $\{h_j\}_{j=1}^k\in \H$ orthonormal and $\{x_j\}_{j=1}^k$ a
sequence in $E$.
Suppose that $\limn \xi_n = \xi$ in $X$. 
Then
$$\limn \n V(\xi_n)R - V(\xi)R\n_{\g(\H,F)}^2 = \limn\E \Big\n \sum_{j=1}^k \g_j
(V(\xi_n)-V(\xi))x_j\Big\n^2 = 0.$$ 
The general case follows from the density of the finite rank operators in
$\g(\H,E)$ and the norm estimate $\n V(\xi)R\n_{\g(\H,F)}\le \n V(\xi)\n \n
R\n_{\g(\H,E)}$.
\end{proof}

\section{Proof of Theorem \ref{thm1}}
\label{sec:main}

We start with a brief discussion of stochastic integrals of operator-valued
functions. Let $H$ be a Hilbert space and fix $T>0$. An {\em $H$-cylindrical Brownian motion}, indexed by $[0,T]$ and defined on a probability space $(\Omega,\F,\P)$, is a mapping $W_{H}:L^2(0,T;H)\rightarrow L^2(\Omega)$ with the following properties:
\bit
\item for all $h\in L^2(0,T;H)$ the random variable $W_{H}(h)$ is Gaussian;
\item for all $h_1,h_2\in L^2(0,T;H)$ we have $\E W_{H}(h_1)W_{H}(h_2)=\langle h_1,h_2\rangle$. 
\eit
Formally, an $H$-cylindrical Brownian motion can be thought of as a standard Brownian motion taking values in the Hilbert space $H$. One easily checks that $W_{H}$ is linear and that for all $h_1,\ldots,h_n \in L^2(0,T;H)$ the random variables $W_{H}(h_1),\ldots,W_{H}(h_n)$ are jointly Gaussian. These random variables are independent if and only if $h_1,\ldots,h_n$ are orthogonal in $H$. For further details see \cite[Section 3]{Nee-survey}.\par

A {\em finite rank step function} is function of the form $\sum_{n=1}^N
1_{(a_n,b_n]}\otimes B_n$ where each operator $B_n:H\to E$ is of finite rank.
The stochastic integral with respect to $W_H$ of such a function is defined by
setting
$$ \int_0^T 1_{(a,b]}\otimes (h\otimes x) \,dW_H:= W_H(1_{(a,b]}\otimes h)\otimes x$$
and this definition is extended by linearity.
A function $\Psi:(0,T)\to \calL(H,E)$ is said to be {\em stochastically
integrable} with respect to $W_H$ if there exists a sequence of finite rank step
functions $\Psi_n:(0,T)\to \calL(H,E)$ such that:
\ben
\item[(i)] for all $h\in H$ we have $\limn \Psi_n h = \Psi h$ in measure on
$(0,T)$;
\item[(ii)] the limit 
$ Y:= \limn \int_0^T \Psi_n\,dW_H$
exists in probability.
\een
In this situation we write $$Y = \int_0^T \Psi\,dW_H$$
and call $Y$ the {\em stochastic integral} of $\Psi$ with respect to $W_H$. 

As was shown in \cite{NeeWei05a}, for finite rank step functions $\Psi$ one has
the isometry
\beq\label{eq:Ito} \E \Big\n  \int_0^T \Psi \,dW_H\Big\n^2 = \n
R_{\Psi}\n_{\g(L^2(0,T;H),E)}^2,
\eeq
where $R_\Psi:L^2(0,T;H)\to E$ is the bounded operator represented by $\Psi$,
i.e., 
\beq\label{eq:RPhi} R_\Psi f = \int_0^T \Psi(t)f(t)\,dt, \quad f\in
L^2(0,T;H).\eeq
As a consequence, a function $\Psi:(0,T)\to \calL(H,E)$ is 
stochastically integrable on $(0,T)$ with respect to $W_H$ if and only if 
$\Psi\s x\s \in L^2(0,T;H)$ for all $x\s\in E\s$ and there exists an operator
$R_\Psi\in\g(L^2(0,T;H),E)$ such that
$$ \lb R_\Psi f,x\s\rb = \int_0^T [f(t),\Psi\s(t)x\s]\,dt, \quad x\s\in E\s.
$$
The isometry \eqref{eq:Ito} extends to this situation. The following simple
observation \cite[Lemma 2.1]{DNW} will be used frequently:

\begin{proposition}\label{prop:gR}
For all $g\in L^2(0,T)$ and $R\in\g(H,E)$ the function $gR: t\mapsto g(t)R$
belongs to 
$\g(L^2(0,T;H),E)$ and we have 
$$ \n g R\n_{\g(L^2(0,T;H),E)}= \n g\n_{L^2(0,T)}\n R\n_{\g(H,E)}.$$
\end{proposition}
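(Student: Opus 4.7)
The plan is a direct computation using an orthonormal basis of $L^2(0,T;H)$ that is perfectly adapted to the rank-one structure of $gR$ in the time variable. Assuming $g\neq 0$ in $L^2(0,T)$ (otherwise both sides vanish trivially), I would set $\varphi_1 := g/\n g\n_{L^2(0,T)}$ and extend to an orthonormal basis $\{\varphi_j\}_{j\ge 1}$ of $L^2(0,T)$. I would also pick an orthonormal basis $\{e_k\}_{k\ge 1}$ of the separable closed subspace $H_R\subseteq H$ outside of which $R$ vanishes (such a subspace exists because $R\in \g(H,E)$). Then $\{\varphi_j\otimes e_k\}_{j,k\ge 1}$ is an orthonormal system in $L^2(0,T;H)$ spanning the part of the space on which the operator represented by $gR$ acts nontrivially.

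From the representation \eqref{eq:RPhi} of the operator associated with a finite rank step function (extended to $gR$ by linearity), one computes
$$(gR)(\varphi_j\otimes e_k) \;=\; \int_0^T g(t)\varphi_j(t)\,dt\;Re_k \;=\; \lb g,\varphi_j\rb_{L^2(0,T)}\, Re_k,$$
which equals $\n g\n_{L^2(0,T)}\,Re_k$ for $j=1$ and vanishes for $j\ge 2$ by orthogonality of $\{\varphi_j\}$. Consequently, the Gaussian sum defining the $\g$-norm collapses to
$$\sum_{j,k}\g_{jk}(gR)(\varphi_j\otimes e_k) \;=\; \n g\n_{L^2(0,T)}\sum_k \g_{1k}\,Re_k,$$
and the right-hand side converges in $L^2(\Om;E)$ precisely because $R\in \g(H,E)$. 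Taking second moments yields the claimed norm identity, and the fact that the partial sums on the right are finite rank operators shows that $gR$ lies in $\g(L^2(0,T;H),E)$ rather than merely in $\g_{\infty}(L^2(0,T;H),E)$.

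I do not foresee any genuine obstacle: the whole argument amounts to diagonalising the auxiliary map $J_g:L^2(0,T;H)\to H$, $f\mapsto \int_0^T g(t)f(t)\,dt$, whose only nonzero singular value is $\n g\n_{L^2(0,T)}$, and then factoring the operator associated with $gR$ as $R\circ J_g$. The only minor book-keeping is the reduction to $H_R$ so that all relevant orthonormal expansions are countable, and a routine verification, by testing against $x\s\in E\s$, that $R\circ J_g$ is indeed the bounded operator represented by the function $t\mapsto g(t)R$ in the sense of \eqref{eq:RPhi}.
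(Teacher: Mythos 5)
Your proof is correct, and since the paper itself only cites this result (\cite[Lemma~2.1]{DNW}) without supplying an argument, there is no in-paper proof to compare against. Your approach---adapt the orthonormal basis of $L^2(0,T)$ to $g$, reduce $R$ to its separable support $H_R$, and observe that the operator represented by $t\mapsto g(t)R$ factors as $R\circ J_g$ with $J_g f=\int_0^T g(t)f(t)\,dt$---is the natural one and almost certainly matches the cited source. The one step you gloss over but should make explicit is that $R_{gR}$ vanishes on the orthogonal complement of $L^2(0,T;H_R)$ (equivalently, on the orthogonal complement of $\overline{\mathrm{span}}\{\varphi_1\otimes e_k\}_k$): this follows because for $f$ orthogonal to all $g\otimes h$ with $h\in H_R$ one has $\langle J_g f,h\rangle_H=\langle f, g\otimes h\rangle=0$, so $J_g f\in H_R^\perp$ and $R(J_g f)=0$. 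With that observation the collapse of the Gaussian sum to $\n g\n_{L^2}\sum_k\gamma_{1k}Re_k$ is justified, the $L^2(\Om;E)$-convergence of this sum is exactly the hypothesis $R\in\g(H,E)$, and the characterisation of $\g(\H,E)$ via convergence of the Gaussian series over an orthonormal basis of a separable supporting subspace (stated in Section~\ref{sec:prelim}) gives both membership in $\g(L^2(0,T;H),E)$ and the norm identity.
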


For the remainder of this section we fix an $E$-valued Brownian motion $W =
\{W(t)\}_{t\ge 0}$ and $T>0$. Let $H$ be the reproducing kernel Hilbert space 
associated with the Gaussian random variable $W(1)$ and let $i:H\embed E$ be the natural inclusion mapping. Then $W$ induces an $H$-cylindrical Brownian motion $W_H$ by putting
\begin{equation}
\begin{aligned}\label{eq:cylBM}
W_H(f\otimes i\s x\s):=\int_{0}^{T} f\,d\langle W,x^*\rangle, \quad f\in L^2(0,T), \ x^*\in E^*.
\end{aligned}\end{equation}
This motivates us to call a function
$\Psi:(0,T)\to \calL(E)$ {\em stochastically integrable} with respect to $W$
if the function $\Psi\circ i:(0,T)\to \calL(H,E)$ is stochastically integrable with respect to $W_H$, in which case we put
$$
\int_0^T \Psi\,dW := \int_0^T (\Psi\circ i)\,dW_H.
$$
It is easy to check that for all $S\in\calL(E)$ the indicator function
$1_{(a,b]}\otimes S$ is stochastically integrable with respect to $W$ and 
$$ \int_0^T 1_{(a,b]}\otimes S\,dW = S(W(b)-W(a)).$$
This shows that the definition is consistent with \eqref{eq:SI1} and \eqref{eq:SI2}.

Now let 
$S = \{S(t)\}_{t\ge 0}$ denote a $C_0$-semigroup of bounded linear operators on
$E$, with generator $A$.
We will be interested in the case where the function to be integrated against
$W_H$ is one of the following:
$$
\Phi(t) := S(t)\circ i, \quad \Phi^{(n)}(t) := \sum_{j=1}^n
1_{I_j^{(n)}}(t)\otimes
[S(t_j^{(n)})\circ i],\quad t\in (0,T).
$$  
We may define bounded operators $R_{\Phi^{(n)}}$ and $R_\Phi$ from $L^2(0,T;H)$
to $E$ by the formula \eqref{eq:RPhi}. Being associated with $\g(H,E)$-valued
step functions, the operators $R_{\Phi^{(n)}}$ belong to
$\g(L^2(0,T;H),E)$ by Proposition \ref{prop:gR}. 
Concerning the question whether the operator
$R_{\Phi}$ is in $\g(L^2(0,T;H),E)$ we have the following result 
\cite[Theorem 7.1]{NeeWei05a}.

\begin{proposition}\label{prop:RPhi}
Let $\Phi(t) = S(t)\circ i$. The following assertions are equivalent:
\ben
\item[\rm(i)] the operator $R_{\Phi}$ belongs to $\g(L^2(0,T;H),E)$;
\item[\rm(ii)] the function  $\Phi$ is stochastically integrable on $(0,T)$ with
respect to $W_H$;
\item[\rm(iii)] for some (all) $x\in E$ the problem \eqref{sACP} 
admits a unique solution $U_x$.
\een
In this situation, for all $x\in E$ and $t\in [0,T]$ we have 
$$
U_x(t) = S(t)x + \int_0^t S(t-s)\,dW(s) = S(t)x+\int_0^t \Phi(t-s)\,dW_H(s)
$$
almost surely. 
\end{proposition}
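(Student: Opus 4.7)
My plan is to establish the chain (i) $\Leftrightarrow$ (ii) $\Leftrightarrow$ (iii), after which the claimed formula for $U_x$ falls out by definition. For (i) $\Leftrightarrow$ (ii) I would simply invoke the general stochastic-integrability criterion recorded earlier in Section \ref{sec:main} (from \cite{NeeWei05a}): a function $\Psi:(0,T)\to\calL(H,E)$ is stochastically integrable with respect to $W_H$ if and only if $\Psi\s x\s \in L^2(0,T;H)$ for every $x\s \in E\s$ and the associated operator $R_\Psi$ belongs to $\g(L^2(0,T;H),E)$. For $\Psi = \Phi = S(\cdot)\circ i$, strong continuity of $S$ makes $t \mapsto \Phi\s(t)x\s = i\s S\s(t)x\s$ bounded and measurable on $[0,T]$, hence automatically in $L^2(0,T;H)$; the substantive content of the equivalence is precisely condition (i).

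For (ii) $\Leftrightarrow$ (iii), the notion of solution employed in the paper is the mild one defined by the stochastic convolution \eqref{eq:SI2}, so (iii) is the statement that for every $t \in [0,T]$ the function $\Phi_t(s) := S(t-s)\circ i$ on $(0,t)$ is stochastically integrable with respect to $W_H$, equivalently $R_{\Phi_t} \in \g(L^2(0,t;H),E)$. The key identification is that the time-reversal map $J_t:L^2(0,t;H)\to L^2(0,t;H)$, $(J_t f)(s) := f(t-s)$, is unitary, and a change of variables in the integral defining $R_{\Phi_t}$ gives $R_{\Phi_t} = (R_\Phi \circ \iota_t) \circ J_t$, where $\iota_t:L^2(0,t;H) \embed L^2(0,T;H)$ is extension by zero. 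Consequently $R_{\Phi_t} \in \g(L^2(0,t;H),E)$ if and only if $R_\Phi \circ \iota_t \in \g(L^2(0,t;H),E)$. Using the ideal property of $\g(\cdot,E)$, (ii) implies the latter for every $t \in (0,T]$, giving (iii); conversely, specialising (iii) to $t = T$ yields $R_{\Phi_T}\in \g(L^2(0,T;H),E)$, and composing with $J_T^{-1}$ gives $R_\Phi \in \g(L^2(0,T;H),E)$, i.e.\ (ii). Uniqueness of $U_x$ is automatic, since \eqref{eq:SI2} determines $U_x$ as soon as the integrals involved exist.

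Finally, the representation $U_x(t) = S(t)x + \int_0^t \Phi(t-s)\,dW_H(s)$ is just the rewriting of \eqref{eq:SI2} through the definition $\int_0^\cdot\,dW := \int_0^\cdot (\cdot\circ i)\,dW_H$ given earlier in the section. I do not anticipate a conceptual obstacle; the main care needed is the book-keeping of the time-reversal identification and the consistent use of the ideal property of $\g(\cdot,E)$ to exchange stochastic integrability on $(0,t)$ with that on $(0,T)$.
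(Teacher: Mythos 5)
The paper itself gives no proof of Proposition~\ref{prop:RPhi}; it cites it verbatim from \cite[Theorem 7.1]{NeeWei05a}. So any proof you write is new content relative to this paper, and the real question is whether it establishes what the cited theorem establishes.

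Your argument for (i)~$\Leftrightarrow$~(ii) is correct and matches the criterion the paper records just before the proposition: for $\Phi=S(\cdot)\circ i$, one only has to check $\Phi\s x\s\in L^2(0,T;H)$, and this is indeed automatic --- though the cleanest reason is that $H$ is separable (being the RKHS of $W(1)$), so that the weak continuity of $t\mapsto i\s S\s(t)x\s$ in $H$ (coming from strong continuity of $S$ on $E$) plus Pettis gives strong measurability, and uniform boundedness of $S$ on $[0,T]$ gives $L^2$; alternatively, compactness of $i$ upgrades weak$\s$-continuity of $S\s$ to norm continuity of $\Phi\s(\cdot)x\s$, which is the device the paper itself uses in the proof of Theorem~\ref{thm:c-bis}. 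Your time-reversal/restriction bookkeeping $R_{\Phi_t}=R_\Phi\circ\iota_t\circ J_t$ is also correct, and the ideal property of $\g(\cdot,E)$ does exactly what you want it to.

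The gap is in how you interpret (iii). You declare at the outset that ``the notion of solution employed in the paper is the mild one defined by the stochastic convolution \eqref{eq:SI2},'' and then derive (ii)~$\Leftrightarrow$~(iii), uniqueness, and the final representation formula as near-tautologies from that declaration. But the proposition's own phrasing --- ``\emph{In this situation}, for all $x\in E$ and $t\in[0,T]$ we have $U_x(t)=S(t)x+\int_0^t S(t-s)\,dW(s)$ almost surely'' --- presents the convolution formula as a \emph{conclusion}, not a definition; likewise the paper says the solution ``is given by'' the stochastic convolution, which presupposes an independent solution concept. In the cited Theorem 7.1 of \cite{NeeWei05a} that concept is a weak solution (roughly: $\lb U_x(t),x\s\rb = \lb x,x\s\rb + \int_0^t\lb U_x(s),A\s x\s\rb\,ds + \lb W(t),x\s\rb$ a.s.\ for $x\s\in D(A\s)$), and the substantive work is precisely the step you skip: showing that existence of a weak solution forces stochastic integrability of $\Phi$, that the weak solution is necessarily given by the convolution (whence uniqueness), and conversely that the convolution is a weak solution. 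Under your redefinition, ``uniqueness is automatic'' and the representation is a notational rewrite; under the intended reading, neither is, and your argument proves a weaker statement than the proposition. To close the gap you would need to bring in the weak-solution formulation and run the standard integration-by-parts / Fubini argument linking it to the convolution, or explicitly appeal to that part of \cite[Theorem 7.1]{NeeWei05a} as the paper does.
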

In \cite{NeeWei05a} an example is presented showing even for
rank one Brownian motions $W$ in $E$ the equivalent conditions need not always
be satisfied for all $C_0$-semigroups $S$ on $E$.
The conditions are satisfied, however, if one of the following additional conditions holds:
\begin{enumerate}
\item[\rm(a)] $E$ is a type 2 Banach space,
\item[\rm(b)] $S$ restricts to a $C_0$-semigroup on $H$,
\item[\rm(c)] $S$ is an analytic $C_0$-semigroup on $E$.
\end{enumerate}
We refer to \cite{DNW,NeeWei05a} for the easy proofs.

We are now in a position to state the main result of this section.
We use the notations introduced above, and let $\mu$ and $\mu^{(n)}$ denote the
Gaussian measures on $E$ associated
with the operators $R_{\Phi}$ and $R_{\Phi^{(n)}}$, respectively.

\begin{theorem}\label{thm:c-bis} 
Suppose that the equivalent conditions of Proposition \ref{prop:RPhi} are
satisfied. The following assertions are equivalent:
\ben
\item $\limn U_x^{(n)}(T) = U_x(T)$ in $L^p(\Om;E)$ for some (all) $x\in E$ and
some (all) $1\le p<\infty$;
\item $\limn R_{\Phi^{(n)}} = R_{\Phi}$ in $\g(L^2(0,T;H),E)$;
\item $\limn \mu^{(n)}=\mu$ weakly.
\een
In this situation we have $\limn U_x^{(n)}(t) = U_x(t)$ in $L^p(\Om;E)$ for all
$x\in E$, $t\in [0,T]$, and $1\le p<\infty$, and in fact we have
$$ \sup_{0\le t\le T}\E \n U_x^{(n)}(t) - U_x(t)\n^p \le 
 \sup_{0\le t\le T} \n S^{(n)}(t)x- S(t)x\n +
\E \n U^{(n)}(T) - U(T)\n^p,$$
where, as before, $U^{(n)} = U_0^{(n)}$ and $U=U_0$ correspond to the initial
value $0$. 
\end{theorem}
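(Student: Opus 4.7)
The plan is to establish the equivalences through the chain $(1) \Leftrightarrow (2) \Leftrightarrow (3)$, followed by the supremum estimate. The crucial observation bridging the $L^p$-side and the $\g$-side is that at the endpoint $S^{(n)}(T)x = S(T)x$, so the splitting error is a pure stochastic integral,
$$U^{(n)}(T) - U(T) = \int_0^T \bigl(S^{(n)}(T-s) - S(T-s)\bigr)\,dW(s).$$
First I would apply the It\^o isometry \eqref{eq:Ito} to the integrand (composed with $i$) and use that the time-reversal $s \mapsto T-s$ is a unitary on $L^2(0,T;H)$, under which the $\g$-norm is preserved (right-ideal property). This gives
$$\E\n U^{(n)}(T) - U(T)\n^2 = \n R_{\Phi^{(n)}} - R_\Phi\n^2_{\g(L^2(0,T;H),E)},$$
settling $(1) \Leftrightarrow (2)$ for $p=2$. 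The extension to arbitrary $1 \le p < \infty$ then follows from the Kahane--Khintchine inequality (Gaussian $L^p$-norms are mutually equivalent), and the $x$-independence of the stochastic part dispatches the `some (all) $x$' quantifier.

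For $(2) \Rightarrow (3)$, the $\g$-norm convergence yields $L^2$-convergence of the Gaussian random variables $\int_0^T \Phi^{(n)}\,dW_H$, whose laws are $\mu^{(n)}$, and convergence in $L^2$ implies convergence in distribution. The main obstacle is the reverse $(3) \Rightarrow (2)$: here I would exploit the explicit structure. Weak convergence of centred Gaussian measures is equivalent, by L\'evy's continuity theorem, to pointwise convergence of the covariance forms $\langle Q^{(n)} x^*, x^* \rangle \to \langle Q x^*, x^* \rangle$, i.e., to $\n R_{\Phi^{(n)}}^* x^*\n_{L^2(0,T;H)} \to \n R_\Phi^* x^*\n_{L^2(0,T;H)}$ for every $x^* \in E^*$. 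Since $R_{\Phi^{(n)}}^* x^*$ is the right-endpoint step approximation of $R_\Phi^* x^*\in L^2(0,T;H)$ on the partition $\{I_j^{(n)}\}$, a Riemann-sum/dominated convergence argument, using that $s \mapsto i^* S^*(s) x^*$ is weakly continuous and dominated by a fixed element of $L^2(0,T;H)$ uniformly in $n$, upgrades this to the strong convergence $R_{\Phi^{(n)}}^* x^* \to R_\Phi^* x^*$ in $L^2(0,T;H)$. Combined with the Gaussian coupling via the common $W_H$, this feeds back into $L^2$-convergence of the stochastic integrals, closing the loop to $(1)$ and hence to $(2)$.

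For the supremum estimate in the final display, I would split $U_x^{(n)}(t) - U_x(t) = (S^{(n)}(t) - S(t))x + (U^{(n)}(t) - U(t))$; the deterministic piece is controlled uniformly in $t$ by $C_0$-continuity of $S$ combined with $t_j^{(n)}\to t$ uniformly on $[0,T]$. For the stochastic piece, the same It\^o-isometry/time-reversal argument applied at arbitrary $t\in[0,T]$ produces
$$\E\n U^{(n)}(t) - U(t)\n^2 \le \n R_{\Phi^{(n)}} - R_\Phi\n^2_{\g(L^2(0,T;H), E)} = \E\n U^{(n)}(T) - U(T)\n^2,$$
where the inequality uses the ideal property of $\g$-radonifying operators under restriction of the time interval, i.e., extension by zero from $L^2(0,t;H)$ into $L^2(0,T;H)$. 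A final application of Kahane--Khintchine lifts this monotonicity to $L^p$ for every $p\in[1,\infty)$, delivering the claimed bound.
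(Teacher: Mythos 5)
Your treatment of $(1)\Leftrightarrow(2)$ for $p=2$ (via the It\^o isometry and time-reversal), the Kahane--Khintchine upgrade to all $p$, the forward direction $(2)\Rightarrow(3)$, and the final supremum estimate via restriction of the time interval all closely parallel the paper's argument and are sound (for the last estimate the paper invokes covariance domination, which gives the $L^p$ inequality with no constant, whereas your route through the $\g$-norm and Kahane--Khintchine would cost a constant for $p\ne 2$; this is harmless for the convergence statement).

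The direction $(3)\Rightarrow(2)$, however, has a genuine gap. You assert that by L\'evy's continuity theorem, weak convergence of the centred Gaussian measures $\mu^{(n)}$ to $\mu$ is \emph{equivalent} to pointwise convergence of the covariance forms $\langle Q^{(n)}x^*,x^*\rangle\to\langle Qx^*,x^*\rangle$. This is false in infinite dimensions: L\'evy's theorem gives only finite-dimensional (cylindrical) convergence, and one still needs tightness to conclude weak convergence of the measures. Indeed, if your stated equivalence were correct, condition $(3)$ would hold automatically --- you yourself verify, unconditionally, that $R_{\Phi^{(n)}}^*x^*\to R_\Phi^*x^*$ strongly in $L^2(0,T;H)$, which implies convergence of the covariance forms --- and the theorem would be vacuous. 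But Section \ref{sec:example} of the paper exhibits a semigroup and noise for which the covariance forms converge (they always do) yet $(2)$ fails. More importantly, the final step --- passing from the strong convergence of the adjoints $R_{\Phi^{(n)}}^*x^*\to R_\Phi^*x^*$ together with weak convergence $\mu^{(n)}\to\mu$ to convergence $R_{\Phi^{(n)}}\to R_\Phi$ in $\g(L^2(0,T;H),E)$ --- is glossed over as ``Gaussian coupling via the common $W_H$.'' Coordinate-wise $L^2$ convergence $\langle Y_n,x^*\rangle\to\langle Y,x^*\rangle$ never by itself yields $Y_n\to Y$ in $L^2(\Omega;E)$, and the upgrade under the additional hypothesis of weak convergence of laws is precisely the nontrivial content of \cite[Theorem 3.1]{HaaNee} (or Neidhardt's argument), which the paper explicitly cites at this point and which your proof must either invoke or reprove.
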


\begin{proof}
We begin by proving the equivalence of (1), (2), (3). Clearly it suffices to
consider the initial value $x=0$. 

For a given $1\le p<\infty$, a sequence of $E$-valued centred Gaussian random
variables converges in
$L^p(\Om;E)$ if and only if it converges in probability in $E$. Therefore, if (1)
holds for some $1\le p<\infty$, then it holds for all $1\le p<\infty$.

Taking $p=2$ in (1) the equivalence (1)$\Leftrightarrow$(2) follows from the
identity \eqref{eq:Ito} 
and the representations \eqref{eq:SI1} and \eqref{eq:SI2}.

Next we claim that $\limn R_{\Phi^{(n)}}\s x\s = R_\Phi\s x\s$ in $L^2(0,T;H)$
for all $x\s\in E\s$. Once we have shown this, the equivalence 
(2)$\Leftrightarrow$(3)
follows from 
\cite[Theorem 3.1]{HaaNee} (or by using the argument of \cite[page 18ff]{Nei}).
To prove the claim we fix $x\s\in E\s$ and note that in $L^2(0,T;H)$ we have
$$ R_\Phi\s x\s = i\s S\s(\cdot)x\s, \quad R_{\Phi^{(n)}}\s x\s =\sum_{j=1}^n
1_{I_j^{(n)}}(\cdot)\otimes i\s S\s(t_j^{(n)})x\s.$$
The inclusion mapping $i:H\embed E$ is $\g$-radonifying and hence compact.
As a consequence, the weak$\s$-continuity of $t\mapsto S\s(t)x\s$ implies 
that $t\mapsto i\s S\s(t)x\s = \Phi\s(t) x\s$ is continuous on $[0,T]$. 
It follows that $\limn R_{\Phi^{(n)}}\s(\cdot) x\s = R_\Phi\s(\cdot) x\s$
in $L^\infty(0,T;H)$, and hence in $L^2(0,T;H)$.

The final assertion an immediate consequence of \eqref{eq:SI1}, \eqref{eq:SI2},
and covariance domination \cite[Corollary 4.4]{NeeWei05a}.
\end{proof}

The assertions (1), (2), (3) are equivalent to the validity of a Lie--Trotter
product formula for the Ornstein-Uhlenbeck semigroup ${\mathscr P} =
\{\mathscr{P}(t)\}_{t\ge 0}$ associated with the problem \eqref{sACP},
which is defined on
the space $C_{\rm b}(E)$ of all bounded real-valued continuous functions on $E$
by the formula
$$ \mathscr{P}(t)f(x)= \E f(U_x(t)), \qquad x\in E, \ t\ge 0,$$
where $U_x$ is the solution of \eqref{sACP}.
In order to explain the precise result,  let us denote by
$\mathscr{S}=\{{\mathscr S}(t)\}_{t\ge 0}$ and $\mathscr{T}=\{{\mathscr
T}(t)\}_{t\ge 0}$ the semigroups on $C_{\rm b}(E)$ corresponding to the drift
term and 
the diffusion term in \eqref{sACP}. Thus,
$$
\begin{aligned}
{\mathscr S}(t)f(x) & = f(S(t)x), \\
{\mathscr T}(t)f(x) &  = \E f(x+W(t)),
\end{aligned}
\qquad t\ge 0, \ x\in E.
$$
Each of the semigroups $\mathscr{P}$, $\mathscr{S}$ and $\mathscr{T}$ is jointly
continuous in $t$ and $x$, uniformly on $[0,T]\times K$ for all compact sets
$K\subseteq E$. It was shown in \cite{KueNee} that if condition (3) of Theorem
\ref{thm:c-bis} holds, then for all $f\in C_{\rm b}(E)$ we have the Lie--Trotter
product formula
\beq\label{eq:LT}
{\mathscr P}(t)f(x)= \lim_{n\rightarrow\infty}
\big[{\mathscr T}({t}/{n}){\mathscr S}({t}/{n})\big]^{n} f(x)
\eeq
with convergence uniformly on $[0,T]\times K$ for all compact sets $K\subseteq
E$. Conversely it follows from the proof of this result that 
\eqref{eq:LT} with $x=0$ implies condition (3) of Theorem \ref{thm:c-bis}.
In the same paper it was shown that \eqref{eq:LT} holds if at least one of the
next two conditions is satisfied:
\ben
\item[(a)] $E$ is isomorphic to a Hilbert space;
\item[(b)] $S$ restricts to a $C_0$-semigroup on $H$.
\een
Thus, either of these conditions implies the convergence $\limn U_x^{(n)}(t) =
U_x(t)$ in $L^p(\Om;E)$ for all $x\in E$ and $t\in [0,T]$ of the splitting
scheme. The proofs in \cite{KueNee} are rather involved.
A simple proof for case (b) has been subsequently obtained by Johanna
Tikanm\"aki (personal communication). In Theorems \ref{thm:b} and \ref{thm:a}
below we shall give simple proofs for both cases (a) and (b), based on the
Proposition \ref{prop:g-cont} and an elementary convergence result for
$\g$-radonifying operators from \cite{NVW07a}, respectively. Moreover, case (a)
is extended to Banach spaces with type $2$.
Recall that a Banach space is said to have {\em type $1\le p\le 2$} if there
exists a constant $C\ge 0$ such that for all
finite choices $x_1,\dots, x_k\in E$ we have
$$ \Big(\E \Big\n \sum_{j=1}^k \g_j x_j \Big\n^2\Big)^\frac12 \le C
\Big(\sum_{j=1}^k \n x_j\n^p\Big)^\frac1p.$$
Hilbert spaces have type $2$ and $L^p$-spaces ($1\le p<\infty$) have type
$\min\{p,2\}$. We refer to \cite{AlbKal} for more information.

\begin{theorem}\label{thm:b}
If $E$ has type $2$, then the equivalent conditions of Proposition
\ref{prop:RPhi} and Theorem \ref{thm:c-bis} hold for every $C_0$-semigroup $S$
on $E$. As a consequence we have
$$\limn \sup_{t\in [0,T]}\E \n U^{(n)}(t) - U(t)\n^p = 0, \quad 1\le p<\infty.$$
\end{theorem}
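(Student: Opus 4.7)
The plan is to verify condition (2) of Theorem \ref{thm:c-bis}, i.e.\ $R_{\Phi^{(n)}}\to R_{\Phi}$ in $\g(L^2(0,T;H),E)$, and then to deduce the stated uniform $L^p$-convergence from the last assertion of that theorem, noting that for the initial value $x=0$ the deterministic term $\sup_{t}\n S^{(n)}(t)x-S(t)x\n$ vanishes identically. The existence of $R_{\Phi}$ as an element of $\g(L^2(0,T;H),E)$ has already been recorded as holding in the type $2$ case (item (a) in the list following Proposition \ref{prop:RPhi}), so only the convergence requires argument.

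The key ingredient I would use is the standard continuous embedding
$$ L^2(0,T;\g(H,E))\embed \g(L^2(0,T;H),E),$$
which is available precisely because $E$ has type $2$: it is proved by approximating $\g(H,E)$-valued functions by step functions and applying the type $2$ inequality in combination with Proposition \ref{prop:gR}. Granted this embedding, the problem reduces to establishing that $\Phi^{(n)}\to \Phi$ in $L^2(0,T;\g(H,E))$.

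To obtain the latter, I would first observe that the inclusion $i:H\embed E$ is $\g$-radonifying, since $H$ is the reproducing kernel Hilbert space of the Gaussian random variable $W(1)$; hence $\Phi(t)=S(t)\circ i\in \g(H,E)$ for every $t\in [0,T]$. Applying Proposition \ref{prop:g-cont} with $V(t)=S(t)$ and $R=i$ then shows that $\Phi:[0,T]\to \g(H,E)$ is continuous, and hence uniformly continuous on the compact interval $[0,T]$. Consequently
$$\sup_{t\in [0,T]}\n \Phi^{(n)}(t)-\Phi(t)\n_{\g(H,E)} \longrightarrow 0,$$
which forces $\Phi^{(n)}\to \Phi$ in $L^2(0,T;\g(H,E))$, and in turn $R_{\Phi^{(n)}}\to R_{\Phi}$ in $\g(L^2(0,T;H),E)$ via the embedding. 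The main (and only nontrivial) obstacle is the type $2$ embedding itself; once that is invoked, the rest reduces to a routine uniform continuity argument on a compact interval, and the uniform $L^p$-convergence in $t\in[0,T]$ follows directly from the final estimate of Theorem \ref{thm:c-bis}.
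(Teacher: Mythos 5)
Your proof is correct and follows essentially the same route as the paper: continuity of $t\mapsto \Phi(t)=S(t)\circ i$ in $\g(H,E)$ via Proposition \ref{prop:g-cont}, uniform convergence $\Phi^{(n)}\to\Phi$ on $[0,T]$, and then the type~$2$ embedding $L^2(0,T;\g(H,E))\embed\g(L^2(0,T;H),E)$ to transfer this to convergence of $R_{\Phi^{(n)}}$ in $\g(L^2(0,T;H),E)$. The paper is just slightly more terse, citing \cite[Lemma 6.1]{NeeWei05b} for the embedding rather than sketching its proof.
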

\begin{proof}
By Proposition \ref{prop:g-cont} we have $\Phi\in C([0,T];\g(H,E))$. This
clearly implies that $\limn \Phi^{(n)} = \Phi$ in $L^\infty(0,T;\g(H,E))$, and 
hence in $L^2(0,T;\g(H,E))$. Since $E$ has type $2$, by \cite[Lemma
6.1]{NeeWei05b} the mapping $\Psi\mapsto R_\Psi$ defines a continuous inclusion 
$L^2(0,T;\g(H,E))\embed \g(L^2(0,T;H),E)$. It follows that 
$\limn R_{\Phi^{(n)}} = R_{\Phi}$ in $\g(L^2(0,T;H),E)$.
\end{proof}

\begin{theorem}\label{thm:a}
If $S$ restricts to a $C_0$-semigroup on $H$, then the equivalent conditions of
Proposition \ref{prop:RPhi} and Theorem \ref{thm:c-bis} hold. As a consequence
we have
$$\limn \sup_{t\in [0,T]} \E \n U^{(n)}(t) -U(t)\n^p =0, \quad 1\le p<\infty.$$
\end{theorem}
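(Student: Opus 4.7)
The strategy is to mimic the proof of Theorem \ref{thm:b}, but with the type-$2$ assumption replaced by a factorisation of $\Phi$ through the reproducing kernel Hilbert space $H$. Since $S$ restricts to a $C_0$-semigroup $S_H = \{S_H(t)\}_{t \ge 0}$ on $H$, we have $S(t) \circ i = i \circ S_H(t)$, and consequently
\[
R_{\Phi} = i \circ V, \qquad R_{\Phi^{(n)}} = i \circ V^{(n)},
\]
where $V, V^{(n)} \in \calL(L^2(0,T;H), H)$ are defined by $Vf := \int_0^T S_H(t) f(t)\,dt$ and $V^{(n)}f := \sum_{j=1}^n S_H(t_j^{(n)}) \int_{I_j^{(n)}} f(t)\,dt$, both having operator norm bounded uniformly in $n$ by $M\sqrt T$ with $M := \sup_{t \in [0,T]} \|S_H(t)\|_{\calL(H)}$. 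Since $i \in \gamma(H,E)$, the right ideal property of $\gamma$-radonifying operators immediately yields $R_{\Phi} = iV \in \gamma(L^2(0,T;H),E)$, verifying condition (i) of Proposition \ref{prop:RPhi}; existence of the unique solution $U$ follows at once.

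Next I would show that $V^{(n)} \to V$ in the strong operator topology of $\calL(L^2(0,T;H), H)$. Writing $\tau_n(t) := t_j^{(n)}$ for $t \in I_j^{(n)}$, one has $|\tau_n(t) - t| \le T/n$, and the strong continuity of $S_H$ gives $\|[S_H(\tau_n(t)) - S_H(t)]f(t)\|_H \to 0$ for almost every $t \in [0,T]$, dominated by the $L^1(0,T)$-function $2M\|f(t)\|_H$. Bochner's dominated convergence theorem then yields $V^{(n)}f \to Vf$ in $H$ for every $f \in L^2(0,T;H)$, i.e., $V^{(n)} \to V$ strongly and uniformly bounded.

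The final step is to upgrade this strong convergence to $\gamma$-norm convergence $iV^{(n)} \to iV$. For this I would invoke the elementary convergence lemma for $\gamma$-radonifying operators from \cite{NVW07a}: if $R \in \gamma(H_1, E)$ and $T_n \to T$ in the strong operator topology of $\calL(H_2, H_1)$ with $\sup_n \|T_n\|_{\calL} < \infty$, then $RT_n \to RT$ in $\gamma(H_2, E)$. Applied with $R = i$, $T_n = V^{(n)}$ and $T = V$, this gives $R_{\Phi^{(n)}} \to R_{\Phi}$ in $\gamma(L^2(0,T;H), E)$, which is condition (2) of Theorem \ref{thm:c-bis}. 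All the stated conclusions, including the uniform $L^p$ bound on $\E\|U^{(n)}(t) - U(t)\|^p$, then follow from that theorem.

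The main obstacle is the absence of any operator-norm continuity of the restricted semigroup $S_H$: this precludes the naive estimate $\|i(V^{(n)} - V)\|_{\gamma} \le \|i\|_{\gamma} \|V^{(n)} - V\|_{\calL}$ that one might hope for by analogy with the type-$2$ proof. Overcoming this is exactly the role of the convergence lemma from \cite{NVW07a}, which converts the strong operator convergence of the $V^{(n)}$ into $\gamma$-norm convergence after composition with the $\gamma$-radonifying operator $i$.
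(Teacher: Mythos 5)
Your factorisation $R_{\Phi} = i\circ V$, $R_{\Phi^{(n)}} = i\circ V^{(n)}$ is exactly the one used in the paper, and your dominated-convergence computation that $V^{(n)}\to V$ strongly in $\calL(L^2(0,T;H),H)$ is correct. However, the convergence lemma you invoke from \cite{NVW07a} is misstated, and the misstatement is fatal: it is \emph{not} true that if $R\in\g(\H_1,E)$ and $T_n\to T$ strongly in $\calL(\H_2,\H_1)$ with $\sup_n\n T_n\n<\infty$, then $RT_n\to RT$ in $\g(\H_2,E)$. The correct hypothesis in \cite[Proposition 2.4]{NVW07a} is strong convergence of the \emph{adjoints}, $T_n^* \to T^*$. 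To see that your version fails, take $\H_1=\H_2=E=\ell^2$, let $T_n$ send $e_n\mapsto e_1$ and all other basis vectors to $0$, and take $R$ any Hilbert--Schmidt operator with $Re_1\ne 0$. Then $T_n\to 0$ strongly and $\n T_n\n=1$, but $\n RT_n\n_{\g}=\n RT_n\n_{\mathrm{HS}}=\n Re_1\n\not\to 0$. The point is that for a finite rank $R=\sum_j g_j\otimes x_j$ one has $RT_n=\sum_j (T_n^*g_j)\otimes x_j$, so it is $T_n^*g_j$, not $T_ng_j$, that needs to converge.

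Consequently, the convergence you actually need to verify is $(V^{(n)})^*h\to V^*h$ for every $h\in H$. Unwinding the definitions gives $(V^*h)(t)=S_H^*(t)h$ and $((V^{(n)})^*h)(t)=S_H^*(\tau_n(t))h$, so one needs the \emph{adjoint} semigroup $S_H^*$ to be strongly continuous in order to run the same dominated-convergence argument. This is true, but not automatic: on a general Banach space the adjoint of a $C_0$-semigroup need not be strongly continuous. On a Hilbert space it is, by Phillips' theorem \cite{Phi55}, and this is precisely the ingredient the paper's proof cites. So your proof has a genuine gap at the last step; the repair is to replace strong convergence of $V^{(n)}$ by strong convergence of $(V^{(n)})^*$, which your dominated-convergence argument delivers verbatim once you invoke Phillips' theorem on strong continuity of $S_H^*$.
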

\begin{proof}
Let $S_H$ denote the restricted semigroup on $H$. 
From the identity $S(t)\circ i = i\circ S_H(t)$ we
have $R_{\Phi} = i\circ T$ and $R_{\Phi^{(n)}} = i\circ T^{(n)}$, where
$T$ and $T^{(n)}$ are the bounded operators from $L^2(0,T;H)$ to $H$ defined
by $$ T f := \int_0^T S_H(t)f(t)\,dt, \quad  
   T^{(n)} f = \int_0^T \sum_{j=1}^n 1_{I_j^{(n)}}(t) S_H(t_j^{(n)})f(t)\,dt.
$$
Since $\limn (T^{(n)})^* h = T^* h$ for all $h\in H$ by the strong continuity of
the adjoint semigroup $S_H\s$ (see \cite{Phi55}), it follows from
\cite[Proposition 2.4]{NVW07a}
that $\limn R_{\Phi^{(n)}} = R_{\Phi}$ in $\g(L^2(0,T;H),E)$.
\end{proof}

\section{Proof of Theorem \ref{thm2b}}
\label{sec:analytic}

In  this section we shall prove convergence of the splitting scheme under the
assumption that the $C_0$-semigroup generated by $A$ is analytic; no assumptions
on the space $E$ are made.
In this situation we are also able to give explicit rates of convergence in
suitable interpolation spaces.

We begin with a minor extension of a result due to Kalton
and Weis \cite{KalWei07}. It enables us to check whether certain $\calL(H,E)$--valued
functions define operators belonging to $\g(L^2(0,T;H),E)$. We refer to 
\cite[Section 13]{Nee-survey} for a detailed proof.

\begin{proposition}\label{prop:C1-derivative}
Let $\Phi: (a,b) \to \g(H,E)$ be continuously
differentiable with $$\int_a^b (s-a)^{\frac12}
\n{\Phi'(s)}\n_{\g(H,E)}\, ds < \infty.$$ 
Define $R_\Phi: L^2(a,b;H)\to E$ by
$$ R_\Phi f:= \int_a^b \Phi(t)f(t)\,dt.$$
Then $R_\Phi \in \gamma (L^2(a,b;H),E)$ and
\[ \n R_\Phi\n_{\gamma (L^2(a,b;H),E)} \leq
(b-a)^\frac12\n{\Phi(b)}\n_{\g(H,E)}+\int_a^b(s-a)^\frac12
 \n \Phi'(s)\n _{\g(H,E)}\, ds. 
\]
\end{proposition}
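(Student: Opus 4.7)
The plan is to express $\Phi(t)$ via the fundamental theorem of calculus in terms of $\Phi(b)$ and $\Phi'$, and then apply the right-ideal property of the $\g$-norm (if $R\in\g(H,E)$ and $J\in\calL(\mathscr{H},H)$ is bounded, then $RJ\in\g(\mathscr{H},E)$ with $\n RJ\n_\g\le\n J\n\,\n R\n_{\g(H,E)}$). The weighted integrability hypothesis forces $\Phi'\in L^1(b-\eps,b;\g(H,E))$ for small $\eps>0$, so $\Phi(b):=\lim_{s\uparrow b}\Phi(s)$ exists in $\g(H,E)$ and $\Phi(t)=\Phi(b)-\int_t^b\Phi'(u)\,du$ for $t\in(a,b)$. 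Substituting and swapping the order of integration (Fubini is justified by Cauchy--Schwarz: $\int_a^b\int_a^u \n\Phi'(u)\n_{\g(H,E)}\n f(t)\n\,dt\,du \le \n f\n_{L^2(a,b;H)}\int_a^b(u-a)^\frac12\n\Phi'(u)\n_{\g(H,E)}\,du<\infty$) produces the decomposition
$$R_\Phi f = \Phi(b)\int_a^b f(t)\,dt - \int_a^b \Phi'(u)\Big(\int_a^u f(t)\,dt\Big)du =: R_1 f - R_2 f.$$

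For $R_1$, the averaging $Jf:=\int_a^b f(t)\,dt$ defines a bounded operator $L^2(a,b;H)\to H$ with $\n J\n\le(b-a)^\frac12$ by Cauchy--Schwarz, and $R_1=\Phi(b)\circ J$; the ideal property gives $\n R_1\n_{\g(L^2(a,b;H),E)}\le(b-a)^\frac12\n\Phi(b)\n_{\g(H,E)}$, which is the first contribution to the claimed bound. For $R_2$, I would set $J_u f:=\int_a^u f(t)\,dt$ (so $\n J_u\n\le(u-a)^\frac12$) and $U_u:=\Phi'(u)\circ J_u$; the ideal property again yields $\n U_u\n_{\g(L^2(a,b;H),E)}\le(u-a)^\frac12\n\Phi'(u)\n_{\g(H,E)}$, which is integrable by hypothesis. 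Since $u\mapsto J_u$ is H\"older-$\frac12$ in operator norm (because $\n J_u-J_v\n\le|u-v|^\frac12$) and $u\mapsto\Phi'(u)$ is continuous in $\g(H,E)$, the composition $u\mapsto U_u$ is continuous as a map into $\g(L^2(a,b;H),E)$, hence Bochner integrable there with norm $\le\int_a^b(u-a)^\frac12\n\Phi'(u)\n_{\g(H,E)}\,du$. Commuting this Bochner integral with the bounded linear evaluation $T\mapsto Tf$ shows that $\int_a^b U_u\,du = R_2$ as operators, giving the second contribution.

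The triangle inequality $\n R_\Phi\n_\g\le\n R_1\n_\g+\n R_2\n_\g$ then yields the claimed estimate. The main technical point I expect to require care is the justification that $u\mapsto U_u$ is genuinely Bochner integrable as a $\g$-valued map at the endpoint $a$: $\Phi'$ need not be integrable there, and it is precisely the H\"older-$\frac12$ behaviour of the family $\{J_u\}$ that compensates, so that the weighted integrability hypothesis produces absolute convergence in the $\g$-norm. This is exactly why the weight $(s-a)^\frac12$ appears in the hypothesis.
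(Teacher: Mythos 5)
Your argument is correct and is exactly the standard proof of this Kalton--Weis type result; the paper itself does not reproduce a proof but defers to \cite[Section 13]{Nee-survey}, where the same integration-by-parts decomposition $\Phi(t)=\Phi(b)-\int_t^b\Phi'(u)\,du$ combined with the right ideal property (applied to the partial averaging operators $J_u$, whose norm $(u-a)^{1/2}$ supplies the weight) is used. The technical point you flag at the end --- that Bochner integrability of $u\mapsto\Phi'(u)\circ J_u$ near $a$ is rescued precisely by the $(u-a)^{1/2}$ factor from $\n J_u\n$, not by integrability of $\Phi'$ itself --- is indeed the crux, and you handle it correctly.
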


For $\a\geq 0$ and large enough $w\in \R$ we define $$E_\a:=
\mathscr{D}((w-A)^\a),$$ which is known to be independent of the choice of $w$.
It is a Banach space with respect to the norm $\n x\n_{E_\a}:= \n
(w-A)^{\a}x\n$. This norm depends of course on $w$, but any two such norms are
mutually equivalent. In what follows we consider $w$ to be fixed.\par
We shall also need the extrapolation spaces $E_{-\a}$, defined for $\a>0$ as the closure of $E$ with respect to the norm $\n
x\n_{E_{-\a}}:= \n (w-A)^{-\a}x\n$. It follows readily from the definitions that
for any two $\a,\b\in \R$ the operator $(w-A)^{\a}$ defines an isomorphism from
$E_{\b}$ onto $E_{\b-\a}$. 

In the next two remarks we fix $\a,\b\geq 0$ and $i\in \gamma(H,E_{\b})$, 
and suppose that $S$ is an analytic $C_0$-semigroup on $E$ with generator $A$.

\begin{remark}\label{r:PhiInGamma}
\rm By \cite[Theorem 2.6.13(c)]{Paz} one has, for any $\theta\geq 0$,
\begin{equation}\begin{aligned}\label{analytic}
\n S(t)\n_{\calL(E,E_{\theta})} \lesssim t^{-\theta}
\end{aligned}\end{equation}
with implied constant independent of
$t\in [0,T]$. 
From this and the ideal property for $\g$-radonifying operators we obtain the following 
estimate for $\Phi(t):= S(t)\circ i$:
$$\begin{aligned}
\n \Phi'(t) \n_{\g(H,E_{\a})} & \leq \n A S(t)\n_{\mathscr{L}(E_{\b},E_{\a})} \n i \n_{\g(H,E_{\b})}\\
& = \n S(t)\n_{\mathscr{L}(E,E_{\a+1-\b})} \n i \n_{\g(H,E_{\b})} \lesssim t^{-(\a+1-\b)^+} \n i \n_{\g(H,E_{\b})}
\end{aligned}
$$
where $r^+:=\max\{0,r\}$ for $r\in\R$; the implied constant is independent of 
$t\in [0,T]$ and $i\in \g(H,E_\b)$.
If $\a-\b<\frac12$, 
it then follows from Proposition \ref{prop:C1-derivative} that 
$$
\n R_\Phi\n_{\g(L^2(0,t;H),E_\a)} \lesssim t^{\min\{\frac12-\a+\b,\frac32\}} \n i\n_{\g(H,E_\b)}, 
$$ 
with implied constant independent of $t\in [0,T]$ and $i\in\g(H,E_\b)$. 
In particular, taking $\a=\b=0$ we see that the equivalent conditions 
of Proposition \ref{prop:RPhi} hold.
\end{remark}

\begin{remark}\label{r:PhiInGamma2}
\rm Suppose that $\d \in [0,\tinv{2})$. Identifying operator-valued functions with 
the integral operators they induce, we have
$$\begin{aligned}
&\n s\mapsto s^{-\d}S(t-s)i \n_{\gamma(L^2(0,t;H),E_{\a})}\\
&\qquad \leq \n s\mapsto s^{-\d}S(t-s)i \n_{\gamma(L^2(0,\frac{t}{2};H),E_{\a})} + \n s\mapsto (t-s)^{-\d}S(s)i \n_{\gamma(L^2(0,\frac{t}{2};H),E_{\a})}.
\end{aligned}$$
Applying Proposition \ref{prop:C1-derivative} to both terms on the right-hand side, if $\a-\b<\frac12$ it follows that $$[s\mapsto s^{-\d}S(t-s)i] \in \gamma(L^2(0,t;H),E_{\a})$$ for all $t\in [0,T]$.
\end{remark}

We need to introduce the following terminology.
Let $E$ and $F$ be Banach spaces. A family of operators $\mathscr{R}\subseteq
\calL(E,F)$ is called {\em $\g$-bounded} if there exists a finite constant $C\ge
0$ such that for all
finite choices $R_1,\dots,R_N \in  \mathscr{R}$ and vectors $x_1,\dots,x_N\in E$
we have
$$ \E \Big\n \sum_{n=1}^N \g_n R_n x_n\Big\n^2 \le C^2  
\E \Big\n \sum_{n=1}^N \g_n x_n\Big\n^2.$$
The least admissible constant $C$ is called the {\em $\g$-bound} of
$\mathscr{R}$, notation $\g(\mathscr{R})$. 
We refer to \cite{CPSW, DHP, KunWei, Wei} for examples and more
information. In these references the related notion of $R$-boundedness
is discussed; this notion is obtained by replacing the Gaussian random variables
by Rademacher variables in the above definition. Any $R$-bounded set is also
$\gamma$-bounded, and the two notions are equivalent if $E$ has finite cotype. \par
We continue with a multiplier result, also due to Kalton
and Weis \cite{KalWei07}. We refer to
\cite[Section 5]{Nee-survey} for a detailed proof.

\begin{proposition}\label{prop:KW}
Suppose that $E$ and $F$ are Banach spaces and $M:(0,T)\to \calL(E,F)$ is a
strongly measurable function
(in the sense that $t\mapsto M(t)x$ is strongly measurable for every $x\in E$)
with $\gamma$-bounded range $\mathscr{M}=\{M(t): \ t\in (0,T)\}$. 
Then for every finite rank simple function $\Phi:(0,T)\to \g(H,E)$ the operator 
$R_{M\Phi}$ belongs to $\g_\infty(L^2(0,T;H),F)$ and 
$$
 \n R_{M\Phi}\n_{\g_\infty(L^2(0,T;H),F)} \le \g(\mathscr{M})\,\n
R_\Phi\n_{\g(L^2(0,T;H),E)}.
$$
As a result, the map $\widetilde M: R_\Phi\mapsto R_{M\Phi}$ has a unique
extension to a bounded operator
$$ \widetilde M:  \g(L^2(0,T;H),E) \to \g_\infty(L^2(0,T;H),F)$$ of norm $\n \widetilde
M\n \le \g(\mathscr{M})$.
\end{proposition}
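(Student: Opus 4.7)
The plan is to establish the bound first for simple $M$ and simple $\Phi$ by a direct computation on an adapted orthonormal family, then to pass to general $M$ by a finite-dimensional approximation, and finally to extend $\widetilde M$ to all of $\g(L^2(0,T;H),E)$ by density. At the outset I would record that $\g$-boundedness implies uniform norm-boundedness, $\sup_{t\in (0,T)} \n M(t)\n_{\calL(E,F)} \le \g(\mathscr{M})$, as follows by applying the defining inequality to single-element sums.

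For the base case, suppose $\Phi = \sum_{n=1}^N 1_{A_n}\otimes \Phi_n$ with disjoint $A_n$ and finite-rank $\Phi_n \in \g(H,E)$, and suppose momentarily that $M = \sum_{n=1}^N 1_{A_n}M_n$ is simple on the same partition with $M_n \in \mathscr{M}$ (the common partition being achieved by a joint refinement). Fix an orthonormal basis $\{h_\ell\}$ of $H$ and consider the orthonormal family $e_{n,\ell} := |A_n|^{-1/2} 1_{A_n}\otimes h_\ell$ in $L^2(0,T;H)$. A short computation shows that both $R_\Phi$ and $R_{M\Phi}$ vanish on its orthogonal complement, while $R_\Phi e_{n,\ell} = |A_n|^{1/2}\Phi_n h_\ell$ and $R_{M\Phi} e_{n,\ell} = M_n(|A_n|^{1/2}\Phi_n h_\ell)$. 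Hence
$$
\begin{aligned}
\n R_{M\Phi}\n^2_{\g(L^2(0,T;H),F)} &= \E\Big\n\sum_{n,\ell} \g_{n,\ell}\, M_n(|A_n|^{1/2}\Phi_n h_\ell)\Big\n^2 \\
&\le \g(\mathscr{M})^2\, \E\Big\n\sum_{n,\ell} \g_{n,\ell}\, |A_n|^{1/2}\Phi_n h_\ell\Big\n^2 = \g(\mathscr{M})^2 \n R_\Phi\n^2_{\g(L^2(0,T;H),E)},
\end{aligned}
$$
where the inequality is simply the definition of $\g$-boundedness applied to the operators $\{M_n\}$ (indexed by the pairs $(n,\ell)$, with repetitions in $\ell$) and the i.i.d.\ standard Gaussians $\g_{n,\ell}$.

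To remove the simplicity assumption on $M$ while keeping $\Phi$ simple, set $E_0 := \Phi_1(H) + \cdots + \Phi_N(H)$, a finite-dimensional subspace of $E$. Strong measurability of $M$ then forces $M(\cdot)|_{E_0}:(0,T)\to \calL(E_0,F)$ to be norm measurable and essentially separably valued, so a Pettis-style partitioning argument produces simple functions $M^{(k)}(t) = \sum_m 1_{B_m^{(k)}}(t)\, M(s_m^{(k)})|_{E_0}$ converging pointwise a.e.\ in operator norm to $M|_{E_0}$. After passing to a common refinement with $\{A_n\}$, the base case applies and yields $\n R_{M^{(k)}\Phi}\n_{\g_\infty} \le \g(\mathscr{M})\n R_\Phi\n_\g$ for every $k$. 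The uniform bound $\n M(t)\n \le \g(\mathscr{M})$ together with dominated convergence shows that $R_{M^{(k)}\Phi} \to R_{M\Phi}$ in the strong operator topology, and the definition of $\n\cdot\n_{\g_\infty}$ as a supremum of expectations depending only on finitely many values $Re_j$ makes it lower semicontinuous under such convergence; the estimate therefore passes to the limit.

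Finally, the extension claim is routine: every finite-rank operator from $L^2(0,T;H)$ to $E$ arises as $R_\Phi$ for some finite-rank simple $\Phi$ (approximate the $H$-valued $L^2$-coefficients in a tensor representation by $H$-valued simple functions and invoke Proposition~\ref{prop:gR}), so such $R_\Phi$ span a dense subspace of $\g(L^2(0,T;H),E)$, and $R_\Phi \mapsto R_{M\Phi}$ extends uniquely by continuity to a bounded map of norm $\le \g(\mathscr{M})$. The step I expect to be most delicate is the approximation of $M$: one must simultaneously keep the simple approximants valued in $\{M(s)|_{E_0}:s\in(0,T)\}$ (so that the $\g$-bound is preserved), compatible with the partition of $\Phi$, and pointwise a.e.\ convergent; once this is arranged, the Fatou-type passage to the limit is clean.
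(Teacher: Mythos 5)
The paper does not actually give a proof of Proposition~\ref{prop:KW}: it attributes the result to Kalton and Weis \cite{KalWei07} and refers the reader to \cite{Nee-survey} for details. Your argument is correct and follows the standard route used there: reduce to $M$ and $\Phi$ simple on a common partition, compute on the adapted orthonormal system $e_{n,\ell}=|A_n|^{-1/2}1_{A_n}\otimes h_\ell$ (both $R_\Phi$ and $R_{M\Phi}$ vanish on its orthogonal complement, since $f\perp e_{n,\ell}$ for all $\ell$ forces $\int_{A_n}f\,dt=0$, and the sums over $\ell$ are finite after choosing $\{h_\ell\}$ adapted to the finitely many finite-rank $\Phi_n$), apply the definition of $\g$-boundedness with the allowed repetitions $M_{n,\ell}:=M_n$, remove the simplicity of $M$ by the finite-dimensional Pettis-measurability approximation on $E_0=\sum_n\Phi_n(H)$, and conclude via lower semicontinuity of $\n\cdot\n_{\g_\infty}$ under strong operator convergence. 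The extension step also works because $\g_\infty(L^2(0,T;H),F)$ is complete and $\{R_\Phi:\Phi \text{ finite rank simple}\}$ is dense in $\g(L^2(0,T;H),E)$, as you indicate. No gaps; this is essentially the proof in the cited reference.
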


In the applications of this result below it will usually be possible to check that
actually we have $R_{M\Phi}\in \g(L^2(0,T;H),F)$. 

We will also need the following sufficient condition for $\g$-boundedness, which
is a variation of a result of Weis \cite[Proposition 2.5]{Wei}.

\begin{proposition}\label{prop:integr-der}
Let $E$ and $F$ be Banach spaces, and let $f:(0,T)\to \calL(E,F)$ 
be an function such that for all $x\in E$ the function $t\mapsto f(t)x$ is
continuously differentiable with integrable derivative.
Then the set $\mathscr{F}: = \{f(t): \ t\in (0,T)\}$ is
$\g$-bounded in $\mathscr{L}(E,F)$ and
$$ \g(\mathscr{F}) \le \n f(0+)\n + \n f'\n_1.$$
\end{proposition}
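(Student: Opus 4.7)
The approach is to represent $f(t)$ via the fundamental theorem of calculus and then invoke Kahane's contraction principle to deal with the sum of different operator values.

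First I would check that $f'$ makes sense as an $\calL(E,F)$-valued function: for each $x\in E$ the derivative $f'(t)x:=\tfrac{d}{dt}f(t)x$ is a strong limit of the bounded operators $h^{-1}(f(t+h)-f(t))$ applied to $x$, so $f'(t)\in\calL(E,F)$ by the uniform boundedness principle, and strong continuity of $s\mapsto f'(s)x$ yields strong measurability of $s\mapsto f'(s)$. A Cauchy argument based on
$$f(t)x-f(s)x = \int_s^t f'(u)x\,du, \qquad 0<s<t<T,$$
together with $\n f'\n_1<\infty$, shows that $f(0+)x:=\lim_{t\da 0}f(t)x$ exists for every $x\in E$; by uniform boundedness $f(0+)\in\calL(E,F)$, and
$$f(t)x = f(0+)x + \int_0^t f'(s)x\,ds \qquad (x\in E,\ t\in(0,T)).$$

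Now fix $t_1,\dots,t_N\in(0,T)$ and $x_1,\dots,x_N\in E$. Substituting this representation into $\sum_n \g_n f(t_n)x_n$ and applying the triangle inequality for $L^2(\Om;F)$-valued Bochner integrals yields
$$\Big(\E\Big\n\sum_{n=1}^N \g_n f(t_n)x_n\Big\n^2\Big)^{\inv{2}} \le \Big(\E\Big\n\sum_{n=1}^N \g_n f(0+)x_n\Big\n^2\Big)^{\inv{2}} + \int_0^T \Big(\E\Big\n\sum_{n=1}^N \g_n 1_{[0,t_n]}(s)f'(s)x_n\Big\n^2\Big)^{\inv{2}}\,ds.$$
The first term on the right-hand side is bounded by $\n f(0+)\n\,(\E\n\sum_n \g_n x_n\n^2)^{1/2}$ by the ideal property of Gaussian sums. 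For the integrand in the second term, Kahane's contraction principle applied first with the scalar contractions $1_{[0,t_n]}(s)\in[0,1]$ and then with the bounded operator $f'(s)$ gives the pointwise bound $\n f'(s)\n\,(\E\n\sum_n \g_n x_n\n^2)^{1/2}$. Integrating in $s$ produces the claimed inequality $\g(\mathscr{F})\le \n f(0+)\n+\n f'\n_1$.

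The only real subtlety is the measurability and representability of $f'$ noted in the first step; once that is in place, the remainder is a routine two-step application of the contraction principle combined with Minkowski's inequality for Bochner integrals.
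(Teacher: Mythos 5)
Your proof is correct. The paper does not include a proof of this proposition, referring instead to \cite[Proposition 2.5]{Wei}; your argument --- representing $f(t)x = f(0+)x + \int_0^t f'(s)x\,ds$ via the fundamental theorem of calculus, applying the Kahane contraction principle pointwise in $s$ with the scalars $1_{[0,t_n]}(s)\in[0,1]$, pulling out the fixed operator $f'(s)$, and closing with Minkowski's integral inequality --- is precisely the standard proof of Weis's result, which transfers from $R$-boundedness to $\gamma$-boundedness without change since the contraction principle holds for Gaussian sums as well, and your preliminary verification of the strong measurability of $f'$ and the existence of $f(0+)$ addresses the only genuine subtlety.
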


Here is a simple application:

\begin{lemma}\label{lem:analyticRbound} Let the $C_0$-semigroup $S$ be analytic
on $E$.

\begin{enumerate}\item[\rm(1)] For all $0\le \a<\d$ and $t\in (0,T]$ the set 
$\mathscr{S}_{\a,\d,t} = \{s^\d S(s): \ s\in [0,t]\}$ is $\g$-bounded
in $\calL(E,E_\a)$ and we have $$\gamma(\mathscr{S}_{\a,\d,t})\lesssim
t^{\d-\a}, \quad t\in (0,T],$$
with implied constant independent of $t\in (0,T]$. 
\item[\rm(2)] For all $0< \a \le 1$ 
the set 
$\mathscr{T}_{\a,t} = \{S(s)-I: \ s\in [0,t]\}$ is $\g$-bounded
in $\calL(E_\a,E)$ and we have $$\gamma(\mathscr{T}_{\a,t})\lesssim t^{\a},
\quad t\in [0,T],$$
with implied constant independent of $t\in [0,T]$.
\end{enumerate}\end{lemma}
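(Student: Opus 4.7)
The plan is to derive both estimates from Proposition \ref{prop:integr-der}, applied to the $C^1$-families $s \mapsto s^\d S(s)$ and $s \mapsto S(s) - I$ respectively. In each case I will differentiate, estimate the derivative's operator norm using \eqref{analytic} together with the commutation of $S(s)$ with fractional powers of $w-A$, and then invoke Proposition \ref{prop:integr-der}.

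For part (1), I would set $f(s):= s^\d S(s)\in \calL(E,E_\a)$. On $(0,T]$ one has $f'(s) = \d s^{\d-1} S(s) + s^\d AS(s)$, and two applications of \eqref{analytic} (with exponents $\a$ and $\a+1$) give $\n f'(s)\n_{\calL(E,E_\a)} \lesssim s^{\d-\a-1}$. Since $\d > \a$ this is integrable, with $L^1$-norm over $(0,t)$ of order $t^{\d-\a}$; moreover $\n f(0+)\n_{\calL(E,E_\a)} \lesssim \lim_{s\da 0} s^{\d-\a} = 0$. Proposition \ref{prop:integr-der} then immediately gives $\g(\mathscr{S}_{\a,\d,t}) \lesssim t^{\d-\a}$.

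Part (2) is the more delicate one. Here I would take $g(s):= S(s)-I \in \calL(E_\a,E)$, so $g'(s) = AS(s)$. The naive bound $\n AS(s)\n_{\calL(E)} \lesssim s^{-1}$ is not integrable; the key observation is that on $E_\a$ one gains $\a$ derivatives by using that $(w-A)^{-\a}$ commutes with $S(s)$ to rewrite, for $x\in E_\a$,
$$ (A-w)S(s)x = -(w-A)^{1-\a} S(s)(w-A)^\a x. $$
Combined with \eqref{analytic} for the exponent $1-\a$, this yields $\n g'(s)\n_{\calL(E_\a,E)} \lesssim s^{\a-1} + 1$, whose integral over $(0,t)$ is of order $t^\a$ (after absorbing a factor $T^{1-\a}$ into the implicit constant). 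Integrating the same derivative bound also shows $\n g(0+)\n_{\calL(E_\a,E)} = 0$, so Proposition \ref{prop:integr-der} yields $\g(\mathscr{T}_{\a,t}) \lesssim t^\a$.

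The only genuine obstacle is the identity used in part (2): trading the non-integrable singularity $s^{-1}$ of $AS(s)$ on $\calL(E)$ for the integrable singularity $s^{\a-1}$ on $\calL(E_\a,E)$ by pulling a factor of $(w-A)^\a$ across the semigroup onto the argument. Once this manipulation is in place, both estimates become essentially routine applications of Proposition \ref{prop:integr-der}.
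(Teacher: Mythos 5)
Your proof is correct, and it uses the same key tool as the paper (Proposition~\ref{prop:integr-der} combined with the analytic estimate \eqref{analytic}), but the details differ in both parts. For part (1) the paper does not argue directly at all; it simply cites \cite{DNW} and \cite[Lemma~10.17]{ISEM}, whereas you give a self-contained proof by differentiating $s\mapsto s^\d S(s)$ and checking integrability of the derivative, which is indeed the argument those references use. For part (2) the paper takes a slightly different decomposition: rather than differentiating $S(s)-I$ directly and then separating $AS(s)=(A-w)S(s)+wS(s)$ as you do, it first establishes the $\g$-bound $\lesssim t^\a$ for the rescaled family $\{e^{-ws}S(s)-I: s\in[0,t]\}$ via the integral identity $e^{-ws}S(s)x-x=\int_0^s e^{-wr}(A-w)S(r)x\,dr$ (whose integrand already has the single singularity $r^{\a-1}$, with no extraneous $wS$ term), and then writes $S(s)-I=(e^{-ws}S(s)-I)+(1-e^{-ws})S(s)$, using $1-e^{-ws}\lesssim s$ and the $\g$-boundedness of $\{S(s):s\in[0,t]\}$ (extracted from the first step) for the remainder. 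Your version is equally valid: the lower-order $wS(s)$ term contributes an integrable $O(1)$ piece, and the resulting $t$ is absorbed into $t^\a$ up to the constant $T^{1-\a}$, which is permissible since the implied constant need only be independent of $t\in[0,T]$. The paper's exponential rescaling simply packages the same estimates so that no absorption is needed.
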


\begin{proof}
For the proof of (1) we refer to \cite{DNW} or \cite[Lemma 10.17]{ISEM}. To
prove (2) it will be shown that for any fixed and large enough $w\in \R$ the
set 
$$\mathscr{T}_{\a,t}^w := \{e^{-ws}S(s)-I: \ s\in [0,t]\}$$ is $\g$-bounded in
$\calL(E_\a,E)$ with $\g$-bound $\lesssim t^\a$.
From this we deduce that $\{S(s): \ s\in [0,t]\}$ is $\g$-bounded in
$\calL(E_\a,E)$ with $\g$-bound $\lesssim 1$.  
In view of the identity $$S(s)-I = (e^{-w s}S(s) - I) + (1-e^{-ws})S(s)$$
and noting that $1-e^{-ws}\lesssim s$, this will prove the assertion of the
lemma.

For all $x\in E$ and $0\le s\le t$,
$$e^{-ws}S(s)x- x=\int_{0}^{s}e^{-wr}(A-w)S(r)x\, dr.$$
By \eqref{analytic} and Proposition \ref{prop:integr-der} the set
$\mathscr{T}_{\a,t}^w$  is $\g$-bounded in $\calL(E_\a,E)$ and 
$ \gamma(\mathscr{T}_{\a,t}^w)\lesssim \int_{0}^{t} s^{\a-1}\, ds \lesssim
{t^{\a}}.$
\end{proof}
 
We shall again write $U=U_0$ and $U^{(n)} = U_0^{(n)}$ for the solution of
\eqref{sACP0} and its approximations by the splitting scheme.
 
\begin{theorem}\label{thm:c} 
Assume that the semigroup $S$ is analytic on $E$ and that
$W$ is a Brownian motion in $E_\b$ for some $\b\ge 0$.
Then the equivalent conditions of Proposition \ref{prop:RPhi} and Theorem
\ref{thm:c-bis} hold. Moreover, for all $\a\ge 0$ and $0\le \theta\le 1$ such
that $\a-\b+\theta<\frac12$, and all $t\in [0,T]$ we have 
\begin{equation}\begin{aligned}\label{rconv}
\Vert R_{\Phi^{(n)}} - R_{\Phi} \Vert_{\g(L^2(0,t;H),E_\a)} \lesssim n^{-\theta}
t^{\frac12-(\a-\b+\theta)^+}.
\end{aligned}\end{equation}
with implied constant independent of $n\ge 1$ and $t\in [0,T]$.
As a consequence, for all $1\le p<\infty$ the solution $U$ of \eqref{sACP0}
satisfies
\beq\label{oconv} \big(\E \n U^{(n)}(t) - U(t)\n_{E_\a}^p\big)^\frac1p  \lesssim
n^{-\theta} t^{\frac12-(\a-\b+\theta)^+}
\eeq
with implied constant independent of $n\ge 1$ and $t\in [0,T]$.
\end{theorem}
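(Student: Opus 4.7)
The plan is to derive the key estimate (\ref{rconv}) by an application of the Kalton--Weis multiplier theorem (Proposition \ref{prop:KW}), and then to deduce the $L^p$-estimate (\ref{oconv}) from (\ref{rconv}) via the It\^o isometry and the Kahane--Khintchine inequality. The hypotheses of Proposition \ref{prop:RPhi} have already been verified in Remark \ref{r:PhiInGamma} (take $\a = \b = 0$ there), so we may concentrate on the quantitative bound (\ref{rconv}).

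The crucial algebraic observation is the factorisation
$$ S(t_j^{(n)}) - S(s) = \bigl[S(t_j^{(n)}-s) - I\bigr]\, S(s), \qquad s \in I_j^{(n)}, $$
which lets us write the integral kernel of $R_{\Phi^{(n)}} - R_\Phi$ on $[0, t]$ as $M(s)\Phi(s)$, where $\Phi(s) = S(s) \circ i$ and $M(s) := S(t_j^{(n)} - s) - I$ for $s \in I_j^{(n)}$. The range $\mathscr{M}$ of $M$ is contained in $\{S(r) - I : r \in [0, T/n]\}$. Viewing this family as a subset of $\calL(E_{\a+\theta}, E_\a)$---which is legitimate because $S$ restricts to an analytic $C_0$-semigroup on $E_\a$ whose extrapolated fractional power space is $E_{\a+\theta}$---Lemma \ref{lem:analyticRbound}(2) yields $\g(\mathscr{M}) \lesssim (T/n)^\theta \lesssim n^{-\theta}$ for $0 < \theta \le 1$. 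Remark \ref{r:PhiInGamma} applied with target space $E_{\a+\theta}$---whose applicability requires exactly the hypothesis $\a-\b+\theta < \tinv{2}$---gives
$$ \n R_\Phi \n_{\g(L^2(0,t;H), E_{\a+\theta})} \lesssim t^{\tinv{2} - (\a-\b+\theta)^+}. $$
Proposition \ref{prop:KW} then yields (\ref{rconv}) a priori only with the $\g_\infty$-norm on the left-hand side; but since $R_{\Phi^{(n)}}$ lies in $\g(L^2(0,t;H), E_\a)$ (as $\Phi^{(n)}$ is a $\g(H, E_\a)$-valued step function, cf.\ Proposition \ref{prop:gR}) and so does $R_\Phi$ by Remark \ref{r:PhiInGamma}, their difference belongs to $\g(L^2(0,t;H), E_\a)$, and the same numerical bound holds for the $\g$-norm. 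The degenerate case $\theta = 0$ is immediate from the triangle inequality combined with Remark \ref{r:PhiInGamma}.

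The passage from (\ref{rconv}) to (\ref{oconv}) is routine. For $p = 2$ the It\^o isometry (\ref{eq:Ito}) identifies $\E \n U^{(n)}(t) - U(t) \n^2_{E_\a}$ with $\n R_{\Phi^{(n)}} - R_\Phi \n^2_{\g(L^2(0,t;H), E_\a)}$, upon noting that the time reversal $s \mapsto t - s$ is an isometry of $L^2(0,t;H)$ and hence preserves the $\g$-norm of the kernel. For general $p \in [1, \infty)$ the Kahane--Khintchine inequality for centred Gaussian random variables with values in a Banach space upgrades the $L^2$-bound to an $L^p$-bound with implied constant depending only on $p$. A minor bookkeeping issue arises when $t$ is not a grid point: the last interval has to be truncated at $t$, but the factorisation above remains valid on the truncated interval and the same estimates apply.

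The main obstacle is the choice of the correct factorisation, so that the regularity-losing factor $S(r) - I$---the source of the rate $n^{-\theta}$---is isolated as a $\g$-bounded multiplier acting \emph{on} the smoothed kernel $S(s) \circ i$, rather than composed with $i$ directly (which would require $i$ to take values in a fractional power space strictly smoother than $E_\b$, a regularity not assumed here). Once the correct factorisation is in place, the rest is a clean application of the Kalton--Weis machinery combined with the derivative estimate of Remark \ref{r:PhiInGamma}.
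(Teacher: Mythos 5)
Your proof is correct and follows the same overall strategy as the paper (isolate the regularity-losing factor $S(\Delta t)-I$ as a $\g$-bounded multiplier via Proposition \ref{prop:KW} and Lemma \ref{lem:analyticRbound}(2), then pass to $L^p$ via It\^o isometry and Kahane--Khintchine), but the factorisation you use is organised differently. The paper writes
$\Phi^{(n)}(s)-\Phi(s)= s^{\d}S(s)\circ\big(S(n^{-1}\lceil ns\rceil-s)-I\big)\circ s^{-\d}i$
for an auxiliary exponent $(\a-\b+\theta)^+<\d<\tfrac12$, treating $\{s^\d S(s)\}$ and $\{S(r)-I\}$ as two $\g$-bounded multiplier families and $s\mapsto s^{-\d}i$ as the $\g$-radonifying seed (via Proposition \ref{prop:gR}), with the $\d$'s cancelling in the final estimate. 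You instead use the two-factor decomposition $\Phi^{(n)}(s)-\Phi(s)=\big(S(n^{-1}\lceil ns\rceil-s)-I\big)\circ\Phi(s)$, keep $\Phi(s)=S(s)\circ i$ intact, and invoke Remark \ref{r:PhiInGamma} (with $\a$ replaced by $\a+\theta$, which uses exactly the hypothesis $\a-\b+\theta<\tfrac12$) to obtain $\n R_\Phi\n_{\g(L^2(0,t;H),E_{\a+\theta})}\lesssim t^{\frac12-(\a-\b+\theta)^+}$ directly. This avoids introducing the free parameter $\d$ and one of the two multiplier families, at the cost of pushing the analytic smoothing estimate into the black box of Remark \ref{r:PhiInGamma}/Proposition \ref{prop:C1-derivative} rather than displaying it as an explicit $\g$-bound. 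Both routes are equally rigorous; yours is marginally more economical, the paper's is marginally more self-contained. Minor bookkeeping: like the paper's proof, your use of Lemma \ref{lem:analyticRbound}(2) requires $\theta>0$, and you correctly dispatch $\theta=0$ separately by the triangle inequality and Remark \ref{r:PhiInGamma}.
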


\begin{proof}
The estimate \eqref{oconv} follows from \eqref{rconv}
via Theorem \ref{thm:c-bis}.

By rescaling time we may assume that $T=1$.
Let $\a,\b,\theta$ be as indicated. 
We begin by noting that the embedding $i:H\embed E$ associated with $W$ belongs
to $\gamma(H,E_{\b})$.

Pick $(\a-\b+\theta)^+<\d<\inv{2}$. Note that for $0<s\le T$ we have
$S^{(n)}(s)=S(n^{-1}\lceil ns \rceil)$ and $s\leq n^{-1}\lceil ns \rceil$, so
one can write, for all $n\ge 1$, 
\beq\label{eq:fact}
\Phi^{(n)}(s)-\Phi(s)= s^{\d}S(s)\circ\big(S(n^{-1}\lceil
ns\rceil-s)-I\big)\circ s^{-\d}i.
\eeq
Fix $t\in (0,1]$.
By the first part of  Lemma \ref{lem:analyticRbound} the set
$$\mathscr{S}_{\d} = \{s^\d S(s): \ s\in [0,t]\}$$ is $\g$-bounded
in $\calL(E,E_{(\alpha-\b+\theta)^+})$ (hence in
$\calL(E,E_{\alpha-\b+\theta})$,
hence in $\calL(E_{\b-\theta},E_{\alpha})$, with the same upper bounds for
the $\gamma$-bounds,  because $S(t)$ commutes with the fractional powers of $A$)
and we have
\begin{equation}\begin{aligned}\label{thm:c:h2}
\gamma(\mathscr{S}_{\d})\lesssim t^{\d-(\a-\b+\theta)^+}.
\end{aligned}\end{equation}
By the second part of Lemma \ref{lem:analyticRbound} the set
$$\mathscr{T}_{\theta,\frac1n} = \{S(s)-I: \ s\in [0,n^{-1}]\}$$ is $\g$-bounded
in $\calL(E_\theta,E)$ (and hence in $\calL(E_{\b},E_{\b-\theta})$, with the
same estimate for the $\g$-boundedness constant), and we have
\begin{equation}\begin{aligned}\label{thm:c:h3}
\gamma(\mathscr{T}_{\theta,\frac1n})\lesssim n^{-\theta}.
\end{aligned}\end{equation}
Using \eqref{eq:fact}, Remark \ref{r:PhiInGamma2}, Proposition \ref{prop:KW},  the identity
\begin{equation}\begin{aligned}\label{thm:c:h1}
\n R_{s\mapsto s^{-\d} i} \n_{\gamma(L^2(0,t;H),E_{\b})}&= 
\n s\mapsto s^{-\d}\n_{L^2(0,t)}
 \n i \n_{\gamma(H,E_{\b})} \eqsim t^{\inv{2}-\d} \n i \n_{\gamma(H,E_{\b})}
\end{aligned}\end{equation}
together with the estimates
\eqref{thm:c:h2}, and \eqref{thm:c:h3}, and noting that $n^{-1}\lceil ns \rceil
-s \leq n^{-1} $, one obtains
$$\begin{aligned}
\n R_{\Phi^{(n)}}-R_{\Phi} \n_{\gamma(L^2(0,t;H),E_{\a})}& \leq
\gamma(\mathscr{S}_{\d})\gamma(\mathscr{T}_{\theta,\frac1n}) \n R_{s\mapsto
s^{-\d} i} \n_{\gamma(L^2(0,t;H),E_{\b})}\\
& \lesssim n^{-\theta} t^{\frac12-(\a-\b+\theta)^+}\n i \n_{\gamma(H,E_{\b})}.
\end{aligned}$$
\end{proof}

\begin{remark}
\rm The condition $\a-\b+\theta<\frac12$ implies, in view of the restriction
$0\le \theta\le 1$, that $\a -\b<\frac12$.  
For $\a-\b<-\frac12$, Theorem \ref{thm:c} gives a rate of convergence of order
$n^{-1}$, whereas for $-\frac12\le \a-\b<\frac12$
 we obtain the rate $n^{-\theta}$ for any $0\le \theta<\frac12-\a+\b$. 
For $-\frac12< \a-\b<\frac12$ one can in fact obtain a slightly better rate at
the final time $T$, namely $(\ln\ln n)/n^{(\inv{2}-\a+\b)}$. More precisely, for
$n\ge 3$ we have
\begin{equation}\begin{aligned}\label{rconv-bis}
\Vert R_{\Phi^{(n)}} - R_{\Phi} \Vert_{\g(L^2(0,T;H),E_\a)} \lesssim
\frac{\ln\ln n}{n^{\inv{2}-\a+\b}} 
\end{aligned}\end{equation}
with constants independent of $n\ge 1$.\par
Once again observe that by scaling we may (and do) assume that $T=1$. In order
to prove \eqref{rconv-bis} we first give an estimate for a given time interval
$[a,b]$ where $0<a<b\leq 1$. In that case, for $\d> \a-\b+1$ one has
\beq\label{rconv1}
\Vert R_{\Phi^{(n)}} - R_{\Phi} \Vert_{\g(L^2(a,b;H),E_\a)} \lesssim n^{-1}
a^{\frac12-\d}b^{\d-\a-\b+1}
\eeq
with implied constant independent of $n\ge 1$ and $0<a<b\leq 1$. 
The proof of \eqref{rconv1} is similar to that of \eqref{rconv}, the main
difference being that we no longer need to ensure the square integrability of  
$s\mapsto s^{-\d}$ near $s=0$ in \eqref{thm:c:h1}. 
The details are as follows. Fix $n\ge 1$ and $0<a< b\le 1$ and pick an arbitrary
$\d>\a-\b+1$. Then, 
\beq\label{thm:c:h1b}
\begin{aligned}
\n R_{s\mapsto s^{-\d} i} \n_{\gamma(L^2(a,b;H),E_{\b})}
 = \n s\mapsto s^{-\d}\n_{L^2(a,b)} \n i \n_{\gamma(H,E_{\b})}
\lesssim a^{\inv{2}-\d} \n i \n_{\gamma(H,E_{\b})},
\end{aligned}
\eeq
with implied constant independent of $a\in (0,1]$ and $b\in (a,1]$;
the last inequality uses that $\d\ge \frac12$.
As in the proof of Theorem \ref{thm:c} the set $\mathscr{T}_{\frac1n} :=
\{S(s)-I: \ s\in [0,n^{-1}]\}$ is $\g$-bounded in $\calL(E_{\b},E_{\b-1})$, with
$\g$-bound
\begin{equation}\begin{aligned}\label{thm:c:h2b}
\gamma(\mathscr{T}_{\frac1n})\lesssim n^{-1}.
\end{aligned}\end{equation}
Finally, since $\d>\a-\b+1$, as in the proof of Theorem \ref{thm:c} 
the set 
$\mathscr{S}_{\d} = \{s^\d S(s): \ s\in [a,b]\}$ is $\g$-bounded
in $\calL(E_{\b-1},E_{\alpha})$ with
\begin{equation}\begin{aligned}\label{thm:c:h3b}
\gamma(\mathscr{S}_{\d})\lesssim b^{\d- \a-\b+1}.
\end{aligned}\end{equation}
Combining \eqref{thm:c:h1b}, \eqref{thm:c:h2b}, and \eqref{thm:c:h3b} we obtain
$$
\bal
\Vert R_{\Phi^{(n)}} - R_{\Phi} \Vert_{\g(L^2(a,b;H),E_\a)} 
 \lesssim n^{-1} a^{\frac12-\d}b^{\d-\a-\b+1} \n i \n_{\g(H,E_{\b})}.
\eal$$
Returning to the proof of estimate \eqref{rconv-bis} we fix an integer $n\ge
3$. 
Because $\b-\a < \inv{2}$ one can pick $\d>0$ such that $1+\a-\b < \d \leq
\frac{3}{2} +2(\a-\b)$. 
For $j=0,1,\dots$ define $a_j:=n^{-1+2^{-j}}$. Note that $a_0=1$ and $\limj
a_j=n^{-1}$.  
If in \eqref{rconv1} we take $a=a_j$ and $b=a_{j-1}$ we obtain the estimate
$$\begin{aligned}
\ & \n R_{\Phi^{(n)}}-R_{\Phi} \n_{\gamma(L^2(a_{j},a_{j-1};H),E_{\a})} \\ &
\qquad \lesssim
n^{-1+(-1+2^{-j})(\inv{2}-\d)+(-1+ 2^{1-j})(\d-\a+\b-1)} \n i\n_{\g(H,E_{\b})}
\\
& \qquad = n^{-\inv{2}+\a-\b+2^{-j}(\d -\frac{3}{2}-2(\a-\b))}\n
i\n_{\g(H,E_{\b})}\\ 
& \qquad \leq n^{-\inv{2}+\a-\b}\n i\n_{\g(H,E_{\b})},
\end{aligned}$$
where the last inequality used that $\d \leq \frac{3}{2} +2(\a-\b)$.
Set $k_n=\lceil (\ln\ln n)/\ln 2 \rceil$, so that $ a_{k_n}\leq en^{-1}$. Using
this estimate for $a_{k_n}$, from Theorem \ref{thm:c} we obtain, for any choice
of $0\le\theta<\inv{2}-\a+\b$ (which then satisfies $\theta<1$),
$$
\n R_{\Phi^{(n)}}-R_{\Phi} \n_{\gamma(L^2(0,a_{k_n};H),E_{\a})}
\lesssim
a_{k_n}^{\inv{2}-\a+\b-\theta} n^{-\theta}\n i\n_{\g(H,E_{\b})} 
\lesssim
n^{-\inv{2}+\a-\b}\n i\n_{\g(H,E_{\b})}.
$$
Combining the above one gets
$$
\bal
& \n R_{\Phi^{(n)}} - R_{\Phi} \n_{\g(L^2(0,1;H),E_\a)} \\ 
& \qquad \lesssim \n R_{\Phi^{(n)}} - R_{\Phi} \n_{\g(L^2(0,a_{k_n};H),E_\a)} 
 + \sum_{j=1}^{k_N} \n R_{\Phi^{(n)}} - R_{\Phi}
\n_{\g(L^2(a_{j},a_{j-1};H),E_\a)}\\
&\qquad \lesssim (1+\ln\ln n) n^{-\inv{2}+\alpha-\b} \n i\n_{\g(H,E_{\b})}.
\eal
$$
This gives the estimate \eqref{rconv-bis}.
\end{remark}

Under the assumptions that $S$ is analytic on $E$ and $W$ is a Brownian motion
on $E$, the solution $U$ of \eqref{sACP0} has a version with trajectories in
$C^\g([0,T];E_\a)$ for any $\a,\g\ge 0$ such that $\a+\g<\frac12$ \cite{DNW}.
The main result of this paper asserts that also the approximating processes
$U^{(n)}$ have trajectories in $C^\g([0,T];E_\a)$ and that the splitting scheme
converges with respect to the $C^\gamma([0,T];E_\a)$-norm, with a convergence
rate depending on $\a$ and $\g$ and the smoothness of the noise.

\begin{theorem}\label{thm:unif}
Let $S$ be analytic on $E$ and suppose that $W$ is a Brownian motion in $E_\b$
for some $\b\ge 0$. 
If $\a,\theta,\g\geq 0$ satisfy $\theta + \gamma < 1$ and $(\a-\b+\theta)^+ +\g
< \frac12$, 
then for all $1\leq p < \infty$ the solution $U$ of
\eqref{sACP0} satisfies
$$\begin{aligned}
\big(\E \n U^{(n)} - U \n_{C^{\gamma}([0,T],E_{\alpha})}^p\big)^{\inv{p}}
\lesssim n^{-\theta},
\end{aligned}$$
with implied constant independent of $n\ge 1$.
\end{theorem}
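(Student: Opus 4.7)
\emph{Plan.} With $V := U^{(n)} - U$, I would aim to prove the H\"older-type increment estimate
\[ \bigl(\E\n V(t) - V(s)\n_{E_\a}^p\bigr)^{\inv{p}} \lesssim n^{-\theta}|t-s|^{\g+\eta}, \qquad 0\le s\le t\le T, \]
for some $\eta > 0$ chosen small enough that $(\a - \b + \theta)^+ + \g + \eta < \tfrac12$ and $\g + \theta + \eta < 1$; such $\eta$ exists by hypothesis. Combined with the pointwise bound $(\E\n V(t)\n_{E_\a}^p)^{\inv{p}} \lesssim n^{-\theta}$ from Theorem \ref{thm:c}, the vanishing $V(0) = 0$, and Gaussian moment equivalence, Kolmogorov's continuity theorem for Banach-space-valued processes then promotes this to the claimed bound in $C^\g([0,T];E_\a)$. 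Since $V(t) - V(s)$ is centered Gaussian in $E_\a$, it suffices to verify the increment estimate for $p = 2$, and by the It\^o-type isometry \eqref{eq:Ito} this becomes a statement about $\g$-radonifying norms.

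\emph{Decomposition.} Using the semigroup identity $U(t) = S(t-s)U(s) + \int_s^t S(t-u)\,dW(u)$ and setting $G_n(r) := S^{(n)}(r) - S(r) = S(r)\tau_n(r)$, with $\tau_n(r) := S(\epsilon_n(r)) - I$ and $\epsilon_n(r) := n^{-1}\lceil nr\rceil - r \in [0,1/n]$, the algebraic cancellation $G_n(t-u) - S(t-s)G_n(s-u) = S(t-u)[\tau_n(t-u) - \tau_n(s-u)]$ yields
\[ V(t) - V(s) = (S(t-s) - I)V(s) + \int_s^t G_n(t-u)\,dW(u) + \int_0^s S(t-u)[\tau_n(t-u) - \tau_n(s-u)]\,i\,dW_H(u). \]
The first term is controlled by applying Theorem \ref{thm:c} in the space $E_{\a + \g + \eta}$ to obtain $\n V(s)\n_{L^2(E_{\a+\g+\eta})} \lesssim n^{-\theta}$, combined with Lemma \ref{lem:analyticRbound}(2), which gives $\n S(t-s) - I\n_{\calL(E_{\a+\g+\eta}, E_\a)} \lesssim |t-s|^{\g+\eta}$. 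The second term is handled by applying Theorem \ref{thm:c} to the interval $[0, t-s]$, yielding a bound $\lesssim n^{-\theta}(t-s)^{\tinv{2} - (\a - \b + \theta)^+}$ which is dominated by $n^{-\theta}|t-s|^{\g + \eta}$ by the choice of $\eta$.

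\emph{The main obstacle.} The third term requires a Kalton--Weis argument refining the proof of Theorem \ref{thm:c}. Picking $\delta \in \bigl((\a - \b + \theta + \g + \eta)^+, \tinv{2}\bigr)$, which is nonempty by hypothesis, the integrand factors as
\[ \bigl[(t-u)^\delta S(t-u)\bigr] \cdot \bigl[\tau_n(t-u) - \tau_n(s-u)\bigr] \cdot \bigl[(t-u)^{-\delta}\, i\bigr]. \]
Lemma \ref{lem:analyticRbound}(1) makes the first family $\g$-bounded in $\calL(E_{\b - \theta - \g - \eta}, E_\a)$, and Proposition \ref{prop:gR} gives the third factor as a $\g$-radonifying operator from $L^2(0, s; H)$ to $E_\b$ with norm $\lesssim T^{\tinv{2} - \delta}\n i\n_{\g(H, E_\b)}$. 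The crux is to show that the middle family $\{\tau_n(t-u) - \tau_n(s-u) : u \in [0, s]\}$ is $\g$-bounded in $\calL(E_\b, E_{\b - \theta - \g - \eta})$ with bound $\lesssim n^{-\theta}|t-s|^{\g + \eta}$. This requires a case analysis on whether $t-u$ and $s-u$ lie in the same subinterval $I_k^{(n)}$ of the splitting partition: in the same-subinterval case (which forces $|t-s| \le 1/n$), the identity $\tau_n(t-u) - \tau_n(s-u) = -S(\epsilon_n(t-u))[S(t-s) - I]$ combined with Lemma \ref{lem:analyticRbound}(2) yields a pointwise bound $|t-s|^{\theta + \g + \eta} \le n^{-\theta}|t-s|^{\g + \eta}$; in the different-subinterval case the uniform bound $\lesssim n^{-(\theta + \g + \eta)}$, together with the restriction that such $u$'s lie in $\{t - u \gtrsim 1/n\}$, combines with the $(t-u)^{-\delta}$ weight in the $L^2$-integration to again produce $\lesssim n^{-\theta}|t-s|^{\g + \eta}$. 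Invoking Proposition \ref{prop:KW} to assemble the three factors then yields the desired bound on the third term, completing the proof.
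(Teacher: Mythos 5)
Your argument is correct in outline and runs on the same machinery as the paper's proof (the estimate \eqref{rconv} of Theorem \ref{thm:c}, the $\g$-boundedness Lemma \ref{lem:analyticRbound}, the Kalton--Weis multiplier Proposition \ref{prop:KW}, a case analysis over the splitting partition, and Gaussian moments plus Kolmogorov--Chentsov at the end), but your decomposition of the increment is genuinely different and somewhat slicker. The paper splits $V^{(n)}(t)-V^{(n)}(s)$ into the tail integral over $(s,t)$ plus the increments of $\Phi$ and of $\Phi^{(n)}$ separately over $(0,s)$, proves the bound $n^{-\theta}(t-s)^{\g}$ only for $t-s<\tfrac1{2n}$ (using the crude pointwise bound of Theorem \ref{thm:c} when $t-s\ge\tfrac1{2n}$), and recovers the exponent $\g$ at the very end by re-running the argument with a slightly larger H\"older exponent. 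You instead use the identity $G_n(t-u)-S(t-s)G_n(s-u)=S(t-u)[\tau_n(t-u)-\tau_n(s-u)]$ to replace the two $(0,s)$-terms by $(S(t-s)-I)V^{(n)}(s)$ --- which is disposed of immediately by Theorem \ref{thm:c} in $E_{\a+\g+\eta}$ together with $\n S(t-s)-I\n_{\calL(E_{\a+\g+\eta},E_\a)}\lesssim (t-s)^{\g+\eta}$ --- plus a single correction term involving only the fractional-step operators $\tau_n$, and you build the extra $\eta$ into the increment estimate from the start, which streamlines the Kolmogorov step. Two points in your sketch need to be filled in, and both are supplied by the paper's own computations. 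First, in your ``crux'' step a pointwise operator-norm bound is not the same as a $\g$-bound; your case-(i) identity $\tau_n(t-u)-\tau_n(s-u)=-S(\epsilon_n(t-u))[S(t-s)-I]$ does rescue this, because the family is then a $\g$-bounded family $\{S(v):v\in[0,n^{-1}]\}$ composed with a fixed operator, and in case (ii) the family sits inside differences of the $\g$-bounded set $\{S(v)-I:v\in[0,n^{-1}]\}$ of Lemma \ref{lem:analyticRbound}(2); you should say this explicitly and split the stochastic integral over the two sets rather than claim one $\g$-bound of size $n^{-\theta}|t-s|^{\g+\eta}$ for the whole family. Second, in case (ii) the statement ``$t-u\gtrsim 1/n$ plus the weight $(t-u)^{-\d}$'' is not by itself enough when $t-s<1/n$ (it only yields $n^{-(\g+\eta)}\n(t-\cdot)^{-\d}\n_{L^2}$, and $\d$ may exceed $\g+\eta$): you also need that the bad set decomposes into $\lesssim n$ pieces each of measure at most $t-s$, which is exactly the paper's estimate $|B_1^{(j)}|\le t-s$ leading to \eqref{eq:B1} and $\n(t-\cdot)^{-\d}\n_{L^2(B_1)}\lesssim n^{1/2}(t-s)^{1/2}$; for $t-s\ge 1/n$ the uniform bound $n^{-(\theta+\g+\eta)}\le n^{-\theta}(t-s)^{\g+\eta}$ already suffices. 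With these details inserted, your proof is complete and gives the theorem.
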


\begin{proof}
By scaling we may assume $T=1$. Put $V^{(n)}:=U^{(n)}-U$. Let $\a,\b,\g$ and
$\theta$ be as indicated. Without loss of generality we assume that $\g>0$. The
main step in the proof is the following claim.

\begin{claim}\label{claim}
There exists a constant $C$ such that for all $n\ge 1$, all $0\leq s < t \leq 1$
satisfying $t-s<\frac{1}{2n}$
 we have
$$
\big(\E \Vert V^{(n)}(t)-V^{(n)}(s) \Vert_{E_{\alpha}}^2\big)^\frac12 \le
Cn^{-\theta}(t-s)^{\g}.
$$
\end{claim}
\begin{proof}
Fix $n\ge 1$ and $0\leq s < t \leq 1$ such that $t-s<\frac{1}{2n}$.
Clearly,
\begin{equation}\label{unif:h1}
\begin{aligned}
\ \big(\E \Vert V^{(n)}(t)-V^{(n)}(s)
\Vert_{E_{\alpha}}^2\big)^\frac12 
 &  \leq \Big(\E \Big\n \int_{s}^{t} \Phi (t-r) - \Phi^{(n)}(t-r)
dW(r)\Big\n^{2}\Big)^{\frac12} \\
& \qquad + \Big(\E \Big\n \int_{0}^{s} \Phi (t-r) -  \Phi (s-r)
dW(r)\Big\n^{2}\Big)^{\frac12} \\
& \qquad + \Big(\E \Big\n \int_{0}^{s} \Phi^{(n)}(t-r) - \Phi^{(n)}(s-r)
dW(r)\Big\n^{2}\Big)^{\frac12}.
\end{aligned}
\end{equation}
For the first term we note that by \eqref{eq:Ito} (and the remark following it)
and \eqref{rconv} 
one has
\begin{equation}
\label{unif:h2}
\begin{aligned}
\ &\Big(\E \Big\Vert \int_{s}^{t} \Phi^{(n)} (t-r)- \Phi (t-r)\, dW_{H}(r)
\Big\Vert^2_{E_{\alpha}}\Big)^\frac12 \\
& \qquad = \Big(\E \Big\Vert \int_{0}^{t-s} \Phi^{(n)} (r)- \Phi (r)\, dW_{H}(r)
\Big\Vert^2_{E_{\alpha}}\Big)^\frac12 \\
&\qquad \lesssim n^{-\theta}(t-s)^{\inv{2}-(\a-\b+\theta)^{+}} \n
i\n_{\g(H,E_\b)} 
\\ & \qquad \le n^{-\theta} (t-s)^{\g} \n i\n_{\g(H,E_\b)}.
\end{aligned}
\end{equation}

The estimate for the second term is extracted from arguments in \cite{NVW08};
see also \cite[Theorem 10.19]{ISEM}. Fix $\eta>0$ such that $(\a-\b+\theta)^+
+\g < \eta < \frac12$. Then the set $\{t^\eta  S(t): \ t\in (0,T)\}$ is
$\g$-bounded in $\calL(E,E_{(\a-\b+\theta)^+ +\g})$ (hence in
$\calL(E,E_{\a-\b+\theta+\g})$, hence in 
$\calL(E_{\b-\theta -\g},E_\a)$) by the first part of Lemma
\ref{lem:analyticRbound}, and therefore 
\begin{equation}\label{unif:h3}
\begin{aligned}
\ &  \Big(\E \Big\n \int_0^s \Phi(t-r)-\Phi(s-r) \,dW_H(r)
\Big\n_{E_\a}^2\Big)^\frac12 
 \\ & \qquad = \Big(\E \Big\n \int_0^s [(s-r)^{\eta}S(s-r)]\circ 
[(s-r)^{-\eta} (S(t-s)-I)\circ i]\,dW_H(r) \Big\n_{E_{\a}}^2\Big)^\frac12 
 \\ & \qquad \lesssim \Big(\E \Big\n \int_0^s (s-r)^{-\eta} (S(t-s)-I)\circ
i)\,dW_H(r) \Big\n_{E_{\b-\theta-\g}}^2\Big)^\frac12 
 \\ & \qquad = \Big(\int_0^s (s-r)^{-2\eta}\,dr\Big)^\frac12\n (S(t-s)-I)\circ
i\n_{\g(H,E_{\b-\theta-\g})} 
 \\ & \qquad \lesssim \n S(t-s)-I\n_{\calL(E_\b, E_{\b-\theta-\g})} \n
i\n_{\g(H,E_\b)}
 \\ & \qquad \eqsim \n S(t-s)-I\n_{\calL(E_{\g+\theta},E)} \n i\n_{\g(H,E_\b)}
 \\ & \qquad \lesssim (t-s)^{\g+\theta}\n i\n_{\g(H,E_\b)}
 \\ & \qquad \lesssim n^{-\theta}(t-s)^{\g}\n i\n_{\g(H,E_\b)}. 
\end{aligned}
\end{equation}

To estimate the third term on the right-hand side of \eqref{unif:h1}, we first
define sets $B_0$ and $B_1$ by
$$\begin{aligned}
B_0:= &\,\{ r\in(0,s): \ S^{(n)}(t-r) = S^{(n)}(s-r)
\}
\\  = &\,\{ r\in(0,s): \ \lceil n(t-r)\rceil = \lceil n(s-r)\rceil\}
,\\
B_1:= &\,\{ r\in(0,s): \ S^{(n)}(t-r) = S(n^{-1})S^{(n)}(s-r)
\}
\\ = &\,\{ r\in(0,s): \ \lceil n(t-r)\rceil = \lceil n(s-r)\rceil +1\}.
\end{aligned}$$
Both equalities follow from the identity $S^{(n)}(u) = S(n^{-1}\lceil nu\rceil)$
for $u\in (0,T)$. 
By definition of $B_0$ and $B_1$ one has
\beq\label{unif:h3a}
\begin{aligned}
&\Big(\E \Big\Vert \int_{0}^{s} \Phi^{(n)} (t-r) - \Phi^{(n)} (s-r)\, dW_{H}(r)
\Big\Vert^2_{E_{\alpha}}\Big)^{\inv{2}} \\
&\qquad \leq \Big(\E \Big\Vert \int_{B_0}  \Phi^{(n)}(t-r) - \Phi^{(n)}(s-r) \,
dW_{H}(r) \Big\Vert^2_{E_{\alpha}}\Big)^{\inv{2}}\\
&\qquad\qquad + \Big(\E \Big\Vert \int_{B_1}  \Phi^{(n)}(t-r) - \Phi^{(n)}(s-r)
\, dW_{H}(r) \Big\Vert^2_{E_{\alpha}}\Big)^{\inv{2}}\\
&\qquad = \Big(\E \Big\Vert \int_{B_1}  S^{(n)}(s-r) ( S(n^{-1})- I ) i \,
dW_{H}(r) \Big\Vert^2_{E_{\alpha}}\Big)^{\inv{2}},
\end{aligned}
\eeq
noting that the integrand of the integral over $B_0$ vanishes.

Set $\d:=\theta+\g$. To estimate the right-hand side, observe that from
$\alpha-\b + \d < \frac12$ we may pick $\eta>0$ such that $\alpha-\b+\d < \eta<
\frac12$. 
Using the identity $S^{(n)}(u)=S(n^{-1}\lceil nu \rceil)$ and applying
Proposition \ref{prop:KW} and part (1) of Lemma \ref{lem:analyticRbound}, and
then using the estimate 
 $\n S(u) - I \n_{\calL(E_\d,E)}\lesssim u^{\d}$ and Proposition \ref{prop:gR},
we obtain
\beq\label{eq:thirdterm}
\begin{aligned}
\ & \Big(\E \Big\Vert \int_{B_1}  S^{(n)}(s-r) ( S(n^{-1})- I ) i \, dW_{H}(r)
\Big\Vert^2_{E_{\alpha}}\Big)^{\inv{2}}
\\ & \quad \eqsim \Big(\E \Big\Vert \int_{B_1}  (n^{-1}\lceil
n(s-r)\rceil)^{\eta} 
S(n^{-1}\lceil n(s-r) \rceil)
\\
& \quad\quad \times (n^{-1}\lceil n(s-r)\rceil)^{-\eta} 
( S(n^{-1})- I)i\, dW_{H}(r) \Big\Vert^2_{E_\a}\Big)^{\inv{2}}\\
& \quad \lesssim \Big(\E \Big\Vert \int_{B_1} (n^{-1}\lceil
n(s-r)\rceil)^{-\eta} 
( S(n^{-1})- I)i\, dW_{H}(r) \Big\Vert^2_{E_{\b-\d}}\Big)^{\inv{2}}\\
& \quad \lesssim 
n^{-\d} \Vert (s-\cdot)^{-\eta} \Vert_{L^{2}(B_1)} \Vert i
\Vert_{\gamma(H,E_\b)}.
\end{aligned}
\eeq
In order to estimate the 
$L^2(B_1)$-norm of the function $f_s(r) := (s-r)^{-\eta}$ we note that $B_1
\subseteq\bigcup_{j=1}^{n} B_1^{(j)}$,
where 
$$
\bal B_1^{(j)} 
&= \{r\in (0,s): \ s-r \le jn^{-1} < t-r\} 
\\ & = \{r\in (0,s): \ jn^{-1}-t+s < s-r \le jn^{-1} \} .
\eal$$
From this it is easy to see that $|B_1^{(j)} | \le t-s$ and that for $r\in
B_1^{(j)}$ one has $$(s-r)^{-2\eta}\leq (jn^{-1}-t+s)^{-2\eta} \le
n^{2\eta}(j-\tfrac12)^{-2\eta}$$ (the latter inequality following from
$t-s<1/2n$), and therefore 
$$\n f_s\n_{L^2(B_1)}^2 = \int_{B_1} |f_s(r)|^2\,dr  \le
n^{2\eta}|B_1^{(j)}|\sum_{j=1}^n \frac{1}{(j-\tfrac12)^{2\eta}}  
 \lesssim  n(t-s).
$$
As a consequence,
\beq\label{eq:B1} 
\n f_s\n_{L^2(B_1)}\lesssim n^\frac12 (t-s)^\frac12 = n^\frac12
(t-s)^{\frac12-\g}(t-s)^\g \lesssim n^\g (t-s)^\g.
\eeq
Combining the estimates \eqref{eq:thirdterm} and \eqref{eq:B1} and estimating
the non-negative powers of $s$ by $1$ we find
\beq\label{eq:est2}
 \Big(\E \Big\Vert \int_{B_1}  S^{(n)}(s-r) ( S^{(n)}(t-s)- I )i\, dW_{H}(r)
\Big\Vert^2_{E_{\alpha}}\Big)^{\inv{2}} \\
\lesssim n^{-\theta}(t-s)^{\g} \n i\n_{\g(H,E_\b)}.
\eeq
Claim \ref{claim} now follows by combining \eqref{unif:h1}, \eqref{unif:h2},
\eqref{unif:h3}, \eqref{unif:h3a} and \eqref{eq:est2}.
\end{proof}

We are now ready to finish the proof of the theorem. 
By the triangle inequality and Theorem \ref{thm:c}, for all $0\le s,t\le 1$ we
have
$$\begin{aligned}
\big(\E \Vert V^{(n)}(t)-V^{(n)}(s) \Vert_{E_{\alpha}}^2\big)^\frac12 
& \le 
\big(\E \Vert U^{(n)}(t)-U(t) \Vert_{E_{\alpha}}^2\big)^\frac12 +
\big(\E \Vert U^{(n)}(s)-U(s) \Vert_{E_{\alpha}}^2\big)^\frac12 
\\ &  \lesssim 
n^{-\delta}\n i\n_{\g(H,E_\b)}.
\end{aligned}$$
Hence if $t-s\geq (2n)^{-1}$ one has
\beq\label{eq:eg2}
\big(\E \Vert V^{(n)}(t)-V^{(n)}(s) \Vert_{E_{\alpha}}^2\big)^\frac12 \lesssim
n^{-\delta}\n i\n_{\g(H,E_\b)}\lesssim n^{-\theta}(t-s)^{\gamma}\n
i\n_{\g(H,E_\b)}.
\eeq
The random variables $V^{(n)}(t)$ being Gaussian, from the claim and
\eqref{eq:eg2} combined with the Kahane-Khintchine inequalities we deduce
that for all $1 \leq q < \infty$ and $0\le s<t\le 1$ one has
\begin{equation}\begin{aligned}\label{eq:concl}
\big(\E \Vert V^{(n)}(t) - V^{(n)}(s) \Vert^q_{E_{\a}}\big)^\frac1q \lesssim
n^{-\theta }(t-s)^{\gamma}\n i\n_{\g(H,E_\b)}. 
\end{aligned}\end{equation}

Now fix any $0<\g'<\g$ and take $1/\g'<q<\infty$. Then
by \eqref{eq:concl} and the Kolmogorov-Chentsov criterion with $L^q$-moments
(see \cite[Theorem 5]{FeyLaP99}),
$$
\n U^{(n)}-U\n_{L^q(\Om;C^{\g'-\frac1q}([0,T];E_\a))}
\lesssim \n U^{(n)}-U\n_{C^\g([0,T];L^q(\Om;E_\a))} 
\lesssim n^{-\theta}\n i\n_{\g(H,E_\b)}.
$$
This inequality shows that for all $0<\bar \g<\g$  we have
$$ \n U^{(n)}-U\n_{L^{q}(\Om;C^{\bar\g}([0,T];E_\a))} \lesssim n^{-\theta}\n
i\n_{\g(H,E_\b)}
$$
for all sufficiently large $1\le q<\infty$.
It is clear that once we know this, this inequality extends to all values $1\le
q<\infty$. 
This completes the proof of the theorem (with $\bar \g$ instead of $\g$, which
obviously suffices).\label{gammaarg}
\end{proof}
 
\begin{corollary}\label{cor:as} Suppose that $S$ is analytic on $E$ and
that $W$ is a Brownian motion in $E_\b$ for some $\b\ge 0$. Let $\a,\g,\theta
\ge 0$ satisfy $\theta+ \gamma < 1$ and $(\a-\b+\theta)^+ +\g<\frac12$.
Then for almost all $\om\in\Om$ there exists a constant $C(\om)$ such that
the solution $U$ of \eqref{sACP0} satisfies
$$\Vert U_x^{(n)}(\cdot,\om) - U_x(\cdot,\om) \Vert_{C^\g([0,T];E_{\alpha})} 
\le \frac{C(\omega)}{n^\theta} \ \hbox{ for all } \ n=1,2,\dots$$
\end{corollary}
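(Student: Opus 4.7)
The plan is a straightforward Borel--Cantelli argument on top of Theorem \ref{thm:unif}. The essential observation is that the hypotheses $\theta+\g<1$ and $(\a-\b+\theta)^+ +\g<\frac12$ are \emph{strict}, which leaves room to pick $\theta'>\theta$ for which $\theta'+\g<1$ and $(\a-\b+\theta')^+ +\g<\frac12$ still hold. Theorem \ref{thm:unif} then gives, for every $1\le p<\infty$,
$$
\big(\E \n U^{(n)}-U\n_{C^\g([0,T];E_\a)}^p\big)^{1/p} \lesssim n^{-\theta'},
$$
where, as in the previous section, we reduce to the zero initial value via the identity $U_x^{(n)}-U_x=U^{(n)}-U$ recorded in the introduction.

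Next, Markov's inequality gives
$$
\P\big(\n U^{(n)}-U\n_{C^\g([0,T];E_\a)} > n^{-\theta}\big) \le n^{p\theta} \E \n U^{(n)}-U\n_{C^\g([0,T];E_\a)}^p \lesssim n^{-p(\theta'-\theta)}.
$$
Choosing $p$ large enough that $p(\theta'-\theta)>1$, the right-hand side is summable in $n$. The Borel--Cantelli lemma then yields a $\P$-null set $N$ off which $n^{\theta}\n U^{(n)}(\cdot,\om)-U(\cdot,\om)\n_{C^\g([0,T];E_\a)} \le 1$ for all but finitely many $n$. Setting
$$
C(\om):= \sup_{n\ge 1} n^{\theta}\n U^{(n)}(\cdot,\om)-U(\cdot,\om)\n_{C^\g([0,T];E_\a)}, \quad \om\in\Om\setminus N,
$$
absorbs the finitely many exceptional indices into a finite random constant, and the corollary follows on the full-measure set $\Om\setminus N$ (with $C(\om)$ defined arbitrarily on $N$).

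There is no real obstacle here; the only point requiring mild care is the selection of $\theta'$, which uses precisely the strictness of the two inequalities in the hypothesis to upgrade the $L^p$-rate from Theorem \ref{thm:unif} into a summable tail bound, after which the Borel--Cantelli conclusion is immediate.
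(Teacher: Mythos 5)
Your proof is correct and follows essentially the same route as the paper's: pick a slightly larger exponent $\bar\theta>\theta$ still satisfying the hypotheses of Theorem \ref{thm:unif}, use Markov/Chebyshev with a large enough $p$ to get a summable tail bound, apply Borel--Cantelli, and take the supremum to define $C(\om)$. The only cosmetic difference is that you explicitly invoke the reduction $U_x^{(n)}-U_x=U^{(n)}-U$ from the introduction, which the paper's proof takes for granted.
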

\begin{proof} 
Set $$\Omega_n := \Big\{\om\in\Om: \ \Vert U^{(n)}(\cdot,\om) - U(\cdot,\om)
\Vert_{C^\g([0,T];E_{\alpha})} > \frac1{n^\theta}\Big\}.$$ 
Pick $\bar{\theta} >\theta$ in such a way that $0\le
\a-\b+\g+\bar\theta<\frac12$ and let $p\ge 1$ be so large that
$(\bar\theta-\theta)p>1$. 
By Theorem \ref{thm:unif}, applied  with $\bar\theta$ instead of $\theta$, and
Chebyshev's inequality, 
$$\P(\Om_n) \le n^{\theta p} \E \Vert U^{(n)}(\cdot,\om) - U(\cdot,\om)
\Vert_{C^\g([0,T];E_{\alpha})}^p \le \frac{C^p}{n^{(\bar\theta-\theta)p}}$$
with constant $C$ independent of $n$. By the choice of $p$ we have $\sum_{n\ge
1} \P(\Om_n)<\infty$, and therefore by the Borel-Cantelli lemma
$$\P(\{\om\in\Om: \ \om\in\Om_n\hbox{ infinitely often}\})=0.$$ For the $\om\in\Om$
belonging to this set we have
$$ C(\om):= \sup_{n\ge 1} n^\theta\Vert U^{(n)}(\cdot,\om) - U(\cdot,\om)
\Vert_{C^\g([0,T];E_{\alpha})} <\infty.$$
\end{proof}

We conclude this section with an application of our results to the stochastic
heat equation on the unit interval driven by space-time white noise.
This example is merely included as a demonstration how such equations can be
handled in the present framework. We don't strive for the greatest possible
generality. For instance, as in \cite{Brz97, DNW} the Laplace operator can be
replaced by more general second order elliptic operators. 

\begin{example}\label{ex:stoch-heat}
\rm Consider the following stochastic partial differential 
equation driven by space-time white noise $w$:
\begin{equation}\label{stochheat}
\left\{
\begin{aligned}
\frac{\partial{u}}{\partial t}(t,x) & 
= \Delta u(t,x) +
\frac{\partial w}{\partial t}(t,x),\quad  & x\in [0,1], \ t\in [0,T], \\
u(0,x) & = 0, \quad & x\in [0,1],\\
u(t,0) & =u(t,1)= 0, \quad & t\in [0,T].
\end{aligned}
\right.\end{equation}
Following the approach of \cite{DNW} we put $H:=L^2(0,1)$ and $F:=L^q(0,1)$,
where the exponent
$q\ge 2$ is to be chosen later on. 
In order to formulate the problem \eqref{stochheat} as an abstract stochastic
evolution equation of the form
\beq\label{SDE}
\left\{\begin{aligned}
dU(t) & = A U(t)\,dt + dW(t), \quad t\in [0,T], \\
 U(0) & = 0,
\end{aligned}
\right.
\eeq
where $W$ is a Brownian motion with values in a suitable Banach space $E$, we
fix an arbitrary real number $\rho<-\frac14$, to be chosen in a moment, 
and let $E:= F_{\rho}$ denote the extrapolation space 
of order $-\rho$ associated with the Dirichlet Laplacian 
in $F$. It is shown in \cite{DNW} (see also \cite[Lemma 6.5]{Brz97}) that 
the identity operator on $H$ extends to a $\g$-radonifying 
embedding from $H$ into $E$. As a result, the $H$-cylindrical Brownian motion
$W_H$ canonically associated with $w$ (see \eqref{eq:cylBM}) may be
identified with a Brownian motion $W$ in $E$. Furthermore
the extrapolated Dirichlet Laplacian, henceforth denoted by $A$,
satisfies the assumptions of Theorem \ref{thm:unif} in $E$.

Let $U$ be the solution of \eqref{SDE} in $E$. By definition, we shall regard
$U$ as the solution of \eqref{stochheat}. Suppose now that we are given  real
numbers $\g,\d,\theta\ge 0$
satisfy $$\g+\d+\theta<\tfrac14.$$
This ensures that one can choose $\a\ge 0$ and $\rho<-\frac14$ in such a way
that  
$\a+\rho>\d$ and $\a+\g+\theta < \frac12$.
By Theorem \ref{thm:unif} (with $\b=0$), for all $1\leq p < \infty$ the
splitting scheme associated with problem \eqref{SDE} satisfies 
$$\begin{aligned}
\big(\E \n U^{(n)} - U \n_{C^{\gamma}([0,T],E_{\alpha})}^p\big)^{\inv{p}}
\lesssim n^{-\theta}.
\end{aligned}$$
Putting $\eta:=\alpha+\rho$ we have
$E_\alpha= (F_\rho)_{\a} =F_{\eta}$, and this space embeds into $F$ since
$\eta>\d\geq 0$.

Choose $q\ge 2$ so large that $2\delta+\frac1q<2\eta$. We have
$$ F_{\eta} = H_0^{2\eta,q}(0,1) = \{f\in H^{2\eta,q}(0,1): \ f(0)=f(1)=0\}$$ 
with equivalent norms. By the Sobolev embedding theorem, 
$$H^{2\eta,q}(0,1)\embed C^{2\delta}[0,1]$$ 
with continuous inclusion.  Here
$C^{2\delta}[0,1]$ is the space of all H\"older continuous functions 
$f:[0,1]\to\R$ of exponent $2\delta$.
We denote $C_0^{2\delta}[0,1] = \{f\in C^{2\delta}[0,1]: f(0)=f(1)=0\}$. 
Putting things together we obtain a continuous inclusion
$$ F_{\eta}\embed C_0^{2\delta}[0,1].$$
\end{example}

We have proved the following theorem (cf. Example \ref{ex:stoch-heat-intro}).

\begin{theorem}\label{thm:stoch-heat} 
For all $0\le \d<\frac14$ the stochastic heat equation \eqref{stochheat}
admits a solution $U$ in $C_0^{2\delta}[0,1]$, and for all $\g,\theta\ge 0$
satisfying  
$\g+\delta+\theta<\frac14$ we have
$$\begin{aligned}
\big(\E \n U^{(n)} - U \n_{C^{\gamma}([0,T],C_0^{2\delta}[0,1])}^p\big)^{\inv{p}}
\lesssim n^{-\theta}.
\end{aligned}$$
\end{theorem}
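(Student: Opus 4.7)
The plan is to deduce this theorem directly from Theorem \ref{thm:unif} by choosing the parameters of the abstract framework set up in Example \ref{ex:stoch-heat} so that the resulting fractional domain space $E_\a$ embeds continuously into $C_0^{2\d}[0,1]$. No new analytical work is required; the proof is essentially a parameter hunt combined with a Sobolev embedding.

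First I would fix the abstract data. Set $H:= L^2(0,1)$, pick $q\ge 2$ (to be sized later), let $F:= L^q(0,1)$, pick $\rho<-\tfrac14$ (to be sized later), and let $E:= F_\rho$ denote the extrapolation space of the Dirichlet Laplacian in $F$. As recalled in Example \ref{ex:stoch-heat}, the results of \cite{DNW} guarantee that $H\embed E$ is $\g$-radonifying, so the cylindrical Brownian motion $W_H$ associated with the space-time white noise $w$ via \eqref{eq:cylBM} lifts to an honest $E$-valued Brownian motion $W$, and the extrapolated Dirichlet Laplacian $A$ generates an analytic $C_0$-semigroup on $E$. Thus Theorem \ref{thm:unif} applies with $\b=0$.

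Next I would choose the remaining parameters. Given $\g,\d,\theta\ge 0$ with $\g+\d+\theta<\tfrac14$, one has $\d+\tfrac14+\g+\theta<\tfrac12$, so there is slack to pick $\a\ge 0$ and $\rho<-\tfrac14$ with
\[
\a+\rho>\d \quad\text{and}\quad \a+\g+\theta<\tfrac12.
\]
Setting $\eta:=\a+\rho$ gives $\eta>\d\ge 0$. Theorem \ref{thm:unif} applied with these $\a,\b=0,\g,\theta$ then yields
\[
\big(\E\n U^{(n)}-U\n_{C^\g([0,T];E_\a)}^p\big)^{1/p}\lesssim n^{-\theta}.
\]

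Finally I would identify $E_\a$ with a nice function space. By the reiteration property of fractional powers, $E_\a=(F_\rho)_\a=F_\eta=H_0^{2\eta,q}(0,1)$ with equivalent norms. Choosing $q\ge 2$ so large that $2\d+\tfrac1q<2\eta$, the Sobolev embedding $H^{2\eta,q}(0,1)\embed C^{2\d}[0,1]$ together with the Dirichlet boundary conditions gives $E_\a\embed C_0^{2\d}[0,1]$. Composing this continuous embedding with the estimate above yields the desired bound. The existence of a solution in $C_0^{2\d}[0,1]$ follows in passing from the existence portion of Theorem \ref{thm:unif}.

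The only place where something could go wrong is the simultaneous fulfilment of $\a+\rho>\d$, $\rho<-\tfrac14$ and $\a+\g+\theta<\tfrac12$, but this is exactly what the hypothesis $\g+\d+\theta<\tfrac14$ is tuned to permit; apart from this bookkeeping the argument is a direct invocation of the already-proved abstract theorem.
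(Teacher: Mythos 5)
Your argument is correct and follows essentially the same route as the paper's Example \ref{ex:stoch-heat}, which is the intended proof of Theorem \ref{thm:stoch-heat}: set up the extrapolated Dirichlet Laplacian on $E=F_\rho$ with $\rho<-\tfrac14$, apply Theorem \ref{thm:unif} with $\b=0$ after tuning $\a$ and $\rho$ so that $\a+\rho>\d$ and $\a+\g+\theta<\tfrac12$, then embed $E_\a=F_{\a+\rho}=H_0^{2(\a+\rho),q}(0,1)$ into $C_0^{2\d}[0,1]$ via Sobolev embedding for $q$ large. No discrepancies.
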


By Corollary \ref{cor:as}, we also obtain almost sure convergence with respect
to
the norm of $C^{\gamma}([0,T],C_0^{2\delta}[0,1])$ with rate $1/n^\theta$.

\section{A counterexample for convergence}\label{sec:example}

We shall now present an example of a $C_0$-semigroup $S$ on a Banach space $E$
and an $E$-valued Brownian motion $W$ such that the problem \eqref{sACP0} admits
a solution with continuous trajectories whilst the 
associated splitting scheme fails to converge. Although the actual construction
is somewhat involved, the semigroup in this example is a translation 
semigroup on a suitable vector-valued Lebesgue space. Such semigroups occur naturally
in the context of stochastic delay equations.

We take $E = L^q(0,1;\ell^p)$, with $1\le p<2$ and $q\ge 2$, and consider the
$E$-valued Brownian motion $W_f = w\otimes f$, where $w$ is a standard
real-valued Brownian motion and $f\in E$ is a fixed element.
With this notation a function $\Psi: (0,1)\to\calL(E)$ is stochastically
integrable with respect to $W_f$ if and only if $\Psi f: (0,1)\to E$ is  
is stochastically integrable with respect to $w$, in which case we have
$$ \int_0^1 \Psi\,dW_f = \int_0^1 \Psi f\,dw.$$

Let $1\leq p < 2$ and $u>\frac2p$ be fixed. For $k=1,2,\ldots$ and $j=0,\ldots
2^{k-1}-1$ define the intervals $I_{k,j} =
(\frac{2j+1}{2^k},\frac{2j+1}{2^k}+2^{-uk}]$. As in particular $u>1$, for all
$k=1,2,\dots$ the intervals $I_{k,i}$ and $I_{k,j}$ are disjoint for $i\neq j$.
Let $0< r < 1- \frac{p}{2}$ and denote the basic sequence of unit vectors in
$\ell^p$ by $\{e_n\}_{n\ge 1}$. Inspired by \cite[Example 3.2]{RosSuc} we define
$f\in L^{\infty}(\mathbb{R};\ell^p)$ by
$$\begin{aligned}
f(t):=\sum_{k=1}^{\infty} \sum_{j=0}^{2^{k-1}-1}  2^{-\frac{r}{p}k}
1_{I_{k,j}}(t) e_{2^{k-1}+j}.
\end{aligned}$$
Observe that $f(t)=0$ for $t\in \mathbb{R}\setminus (0,1)$ and $f$ is
well-defined: because $I_{k,j}$ and $I_{k,i}$ are disjoint for $i\neq j$ one
has, for any $t\in (0,1)$,
$$\begin{aligned}
\Vert f(t) \Vert^p_{\ell^p} \leq \sum_{k=1}^{\infty}  2^{-rk} < \infty.
\end{aligned}$$
For a given interval $I=(a,b]$, $0\leq a <b<\infty$, we write $\Delta w_I :=
w(b) -w(a)$. 

\begin{claim}\label{c:fstochint}
The function $f$ is stochastically integrable on $(0,1)$ and
\beq\label{repstochintf}
\begin{aligned}
\int_{0}^{1} f(t) \,dw(t) &= \sum_{n=1}^{\infty} \int_{0}^{1}
\lb{f(t)},{e^*_n}\rb e_n \,dw(t)  \\
&= \sum_{k=1}^{\infty} \sum_{j=0}^{2^{k-1}-1} 2^{-\frac{r}{p} k} \Delta
w_{I_{k,j}} e_{2^{k-1}+j},
\end{aligned}
\eeq
where $\{e_n^*\}_{n\ge 1}$ is the basic sequence of unit vectors in $\ell^{p'}$,
$\inv{p}+\inv{p'}=1$.
\end{claim}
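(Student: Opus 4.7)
The plan is to prove the claim by approximating $f$ with a sequence of finite rank step functions and verifying the Cauchy property of the corresponding stochastic integrals in $L^{p}(\Om;\ell^{p})$; stochastic integrability then follows from the definition in Section \ref{sec:main}. Concretely, for $N\ge 1$ set
$$f_{N}(t) := \sum_{k=1}^{N}\sum_{j=0}^{2^{k-1}-1} 2^{-\frac{r}{p}k}\,1_{I_{k,j}}(t)\,e_{2^{k-1}+j}.$$
Each $f_{N}$ is a finite rank step function, whose stochastic integral against $w$ is
$$\int_{0}^{1} f_{N}\,dw = \sum_{k=1}^{N}\sum_{j=0}^{2^{k-1}-1} 2^{-\frac{r}{p}k}\Delta w_{I_{k,j}} e_{2^{k-1}+j}.$$
By construction $f_{N}(t)\to f(t)$ in $\ell^{p}$ for every $t\in (0,1)$, so in particular $f_{N}\to f$ in measure on $(0,1)$, which settles condition (i) of the definition of stochastic integrability.

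The main estimate is the Cauchy property of $\{\int_{0}^{1}f_{N}\,dw\}_{N}$ in $L^{p}(\Om;\ell^{p})$. For $M<N$ the vectors appearing in $\int_{0}^{1}(f_{N}-f_{M})\,dw$ are pairwise distinct standard unit vectors in $\ell^{p}$, so disjointness of supports in the $\ell^{p}$-norm combined with $\E|\Delta w_{I}|^{p}\eqsim |I|^{p/2}$ yields
$$\E\Big\n \int_{0}^{1}(f_{N}-f_{M})\,dw\Big\n_{\ell^{p}}^{p} = \sum_{k=M+1}^{N}\sum_{j=0}^{2^{k-1}-1} 2^{-rk}\,\E|\Delta w_{I_{k,j}}|^{p} \lesssim \sum_{k=M+1}^{N} 2^{k(1-r-up/2)},$$
using $|I_{k,j}|=2^{-uk}$ and that there are $2^{k-1}$ intervals at level $k$. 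Since $u>2/p$ and $r>0$ give $1-r-up/2 < -r<0$, this is the tail of a convergent geometric series and tends to $0$ as $M\to\infty$.

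Hence $\{\int_{0}^{1}f_{N}\,dw\}_{N}$ is Cauchy, and therefore convergent, in $L^{p}(\Om;\ell^{p})$; since the terms are Gaussian random vectors in $\ell^{p}$, the Kahane--Khintchine inequalities upgrade this to convergence in $L^{2}(\Om;\ell^{p})$, in particular in probability, giving condition (ii) of the definition. Thus $f$ is stochastically integrable and the limit identifies $\int_{0}^{1} f\,dw$ with the double series on the right of \eqref{repstochintf}. The middle equality in \eqref{repstochintf} is obtained by re-indexing: writing $n=2^{k-1}+j$, the definition of $f$ gives $\lb f(t),e_{n}^{*}\rb = 2^{-rk/p}1_{I_{k,j}}(t)$, so $\bigl(\int_{0}^{1}\lb f,e_{n}^{*}\rb\,dw\bigr)e_{n} = 2^{-rk/p}\Delta w_{I_{k,j}}e_{2^{k-1}+j}$, and summing over $n\ge 1$ is the same as summing over all pairs $(k,j)$. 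The only point to watch is the exponent $1-r-up/2$ in the Cauchy estimate: the combinatorial factor $2^{k-1}$ coming from the number of intervals at level $k$ is precisely absorbed by the noise moments $2^{-ukp/2}$, and this is exactly where the hypothesis $u>2/p$ enters.
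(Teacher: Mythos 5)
Your proof is correct, and it takes a slightly different route from the paper's. The paper verifies stochastic integrability of $f$ via the characterization in \cite[Theorem 2.3, $(3)\Rightarrow(1)$]{NeeWei05a}: it defines the candidate Gaussian random variable $X$ as the double sum, establishes its $L^p(\Om;\ell^p)$-convergence (using the same moment computation you carry out), and then checks weak-$\ast$ agreement $\lb X, a^*\rb = \int_0^1 \lb f(t), a^*\rb\,dw(t)$ for finite linear combinations $a^*$. You instead verify the definition of stochastic integrability directly: the truncations $f_N$ are bona fide finite-rank step functions of the form demanded in Section \ref{sec:main}, their pointwise $\ell^p$-convergence to $f$ gives convergence in measure (condition (i)), and your Cauchy estimate on $\{\int_0^1 f_N\,dw\}_N$ in $L^p(\Om;\ell^p)$ — which is exactly the paper's Fubini computation applied to a tail — gives convergence in probability (condition (ii)). Your exponent arithmetic is right: disjointness of the supports of the $e_{2^{k-1}+j}$ in $\ell^p$ turns the norm into an exact $\ell^p$-sum, $\E|\Delta w_{I_{k,j}}|^p = 2^{-ukp/2}\E|\gamma|^p$, the $2^{k-1}$ intervals at level $k$ contribute the combinatorial factor, and $u>2/p$, $r>0$ force $1-r-up/2<0$. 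What your approach buys is that it avoids invoking the abstract representation theorem from \cite{NeeWei05a} and only uses the elementary definition; what it costs is a small amount of bookkeeping to check that $f_N$ really are admissible finite-rank step functions and that $L^p$-convergence with $p\geq1$ suffices for convergence in probability (which it does — the Kahane--Khintchine upgrade you mention is a nice extra but not strictly needed for condition (ii)). The re-indexing for the middle equality in \eqref{repstochintf} is also correct, since $(k,j)\mapsto 2^{k-1}+j$ is a bijection onto $\N$.
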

\begin{proof}
We shall deduce this from \cite[Theorem 2.3, $(3)\Rightarrow (1)$]{NeeWei05a}.
Define the $\ell^p$-valued Gaussian random variable $$X:= \sum_{k=1}^{\infty}
\sum_{j=0}^{2^{k-1}-1} 2^{-\frac{r}{p} k} \Delta w_{I_{k,j}} e_{2^{k-1}+j}.$$
This sum converges absolutely in $L^p(\Om;\ell^p)$.
Indeed, let $\gamma$ denote a standard Gaussian random variable. Then by
Fubini's theorem one has
$$\begin{aligned}
\E \Big\Vert \sum_{k=1}^{\infty} \sum_{j=0}^{2^{k-1}-1} 2^{-\frac{r}{p} k}
\Delta w_{I_{k,j}} e_{2^{k-1}+j} \Big\Vert^p_{\ell^p} &= \sum_{k=1}^{\infty}
\sum_{j=0}^{2^{k-1}-1} 2^{-rk} 2^{-\frac{u}{2}pk}\E|\gamma|^p \\
&= \sum_{k=1}^{\infty} 2^{k(1-r-\frac{u}{2}p)-1}\E|\gamma|^p < \infty.
\end{aligned}
$$
By the Kahane-Khintchine inequalities, the sum defining $X$ converges absolutely
in $L^q(\Omega;\ell^p)$ for all $1\le q<\infty$. 

For any linear combination $a^*=\sum_{n=1}^{N}a_n e^*_n\in \ell^{p'}$ one easily
checks that
$$\begin{aligned}
\lb{X},{a^*}\rb= \int_{0}^{1} \lb{f(t)},{a^*}\rb\, dw(t).
\end{aligned}$$
Hence by \cite[Theorem 2.3]{NeeWei05a}, $f$ is stochastically integrable and
\eqref{repstochintf} holds.
\end{proof}

By similar reasoning (or an application of \cite[Corollary 2.7]{NeeWei05a}), for
all $s\in\mathbb{R}$ the function $t\mapsto f(t+s)$ is stochastically integrable
on $(0,1)$ and
$$
\int_{0}^{1} f(t+s) \,dw(t) = \sum_{n=1}^{\infty} \int_{0}^{1}
\lb{f(t+s)},e_n\s\rb e_n\,dw(t). 
$$
Let $q\geq1$ and let $\{S(t)\}_{t\in\mathbb{R}}$ be the left-shift group on
$L^{q}(\mathbb{R};\ell^p)$ defined by
$$\begin{aligned}
(S(t)g)(s)&=g(t+s), \quad s,t\in\R,\ g\in L^{q}(\R;\ell^p).
\end{aligned}$$
\begin{claim}\label{c:Sfstochint}
For any $q\geq 1$ the $L^q(\mathbb{R};\ell^p)$-valued function $t\mapsto S(t)f$
is stochastically integrable on $(0,1)$ and
$$\begin{aligned}
\Big(\int_{0}^{1} S(t) f \,dw(t)\Big)(s) &= \int_{0}^{1} f(t+s) \,dw(t)
\end{aligned}$$
for almost all $s\in \mathbb{R}$ almost surely.
\end{claim}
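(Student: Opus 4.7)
The plan is to take the pointwise (in $s$) formula $Y(s) := \int_0^1 f(t+s)\,dw(t)$ as the candidate for the stochastic integral of $t\mapsto S(t)f$ in $L^q(\R;\ell^p)$, and then verify stochastic integrability by means of the weak-to-strong criterion of \cite[Theorem 2.3]{NeeWei05a}. Concretely, for each fixed $s\in\R$, the function $t\mapsto f(t+s)$ is $\ell^p$-valued and admits a series representation analogous to the one in Claim \ref{c:fstochint}, obtained by translating each interval $I_{k,j}$ by $-s$; stochastic integrability on $(0,1)$ then follows by exactly the argument of Claim \ref{c:fstochint} applied to the translated intervals, and
$$Y(s)=\sum_{k=1}^{\infty}\sum_{j=0}^{2^{k-1}-1} 2^{-\frac{r}{p}k}\,\Delta w_{(I_{k,j}-s)\cap(0,1)}\,e_{2^{k-1}+j}.$$

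The first substantive step is to verify that the map $s\mapsto Y(s)$ lies in $L^q(\R;\ell^p)$ almost surely. I would use the fact that $Y(s)=0$ outside a bounded set, invoke Fubini to exchange expectation and $ds$-integration, and bound $\E\|Y(s)\|_{\ell^p}^q$ using Gaussian moment estimates together with the fact that each interval $I_{k,j}$ has length $2^{-uk}$ with $u>2/p$, so the summability already used in the proof of Claim \ref{c:fstochint} carries over. (Kahane-Khintchine lets one pass freely between the $L^q(\Om)$- and $L^2(\Om)$-moments of the Gaussian sum.)

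The main step is to check the weak identification: for every $g^*\in L^{q'}(\R;\ell^{p'})=E^*$,
$$\lb Y,g^*\rb=\int_{\R}\lb Y(s),g^*(s)\rb\,ds=\int_0^1\lb S(t)f,g^*\rb\,dw(t)\quad\text{almost surely.}$$
I would first verify this when $g^*$ has the form $h\otimes e_n^*$ with $h\in C_c(\R)$, where everything reduces to a scalar stochastic Fubini statement that can be checked directly from the series \eqref{repstochintf} (the sum is absolutely convergent in $L^2(\Om)$, so termwise interchange with $\int_{\R}h(s)\,ds$ is permitted). The general case follows by linearity and a density argument in $L^{q'}(\R;\ell^{p'})$, combined with the It\^o isometry for scalar-valued stochastic integrals to control the passage to the limit on the right-hand side and dominated convergence on the left. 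Once this identification is in place, $Y$ defines a centred $E$-valued Gaussian random variable whose action on $E^*$ coincides with the scalar stochastic integrals of $t\mapsto\lb S(t)f,g^*\rb$, and \cite[Theorem 2.3]{NeeWei05a} delivers stochastic integrability of $t\mapsto S(t)f$ with $\int_0^1 S(t)f\,dw(t)=Y$.

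The step I expect to be most delicate is the stochastic Fubini identification in the display above, since $E^*=L^{q'}(\R;\ell^{p'})$ is genuinely infinite-dimensional and we have no a priori Burkholder-type inequalities in $E$; the argument must therefore be routed through scalar It\^o isometries applied to suitably chosen dense classes of test functions, with care taken in the interchange of the series defining $f$ with the $ds$- and $dw$-integrations.
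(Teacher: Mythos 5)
Your proposal follows essentially the same route as the paper: both take $Y(s):=\int_0^1 f(t+s)\,dw(t)$ as the candidate, establish that $s\mapsto Y(s)$ defines an element of $L^q(\Omega;L^q(\R;\ell^p))$, and then reduce stochastic integrability to a weak duality identity against a dense family of tensor-type functionals, which in both cases is settled by the stochastic Fubini theorem of \cite{NeeWei05a} combined with \cite[Theorem 2.3]{NeeWei05a}. The only (cosmetic) differences are that the paper obtains the moment bound by covariance domination against $\int_0^1 f(t)\,dw(t)$ rather than re-running the summability estimate, and tests against $1_A\otimes a^*$ rather than $h\otimes e_n^*$ with $h\in C_c(\R)$.
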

\begin{proof}
For $s\not\in (-1,1)$ the function $t\mapsto f(t+s)$ is identically $0$ on 
$(0,1)$, and for $s\in (-1,1)$ we have
$$ \E \Big\n \int_0^1 f(t+s)\,dw(t)\Big\n_{\ell^p}^q \le  \E \Big\n \int_0^1
f(t)\,dw(t)\Big\n_{\ell^p}^q.$$ 
As a consequence, $L^q(\Omega;\ell^p)$-valued function
$s\mapsto \int_0^1 f(s+t)\,dw(t)$ defines an element of
$L^q(\R;L^q(\Omega;\ell^p))$. Under the natural isometry
$ L^q(\R;L^q(\Omega;\ell^p))\simeq L^q(\Omega;L^q(\R;\ell^p))$
we may identify this function with an element $Y\in L^q(\Omega;L^q(\R;\ell^p))$.
To establish the claim,  with an appeal to \cite[Theorem 2.3]{NeeWei05a}
it suffices to check that for all $a^*\in \ell^{p'}$ and Borel sets $A\in
\mathscr{B}(\mathbb{R})$ we have
$$
\int_{0}^{1} \lb{S(t)f},1_A\otimes {a^*}\rb \,dw(t) = \lb Y, 1_A\otimes a\s\rb. 
\label{i:int} 
$$
By writing out both sides, this identity is seen to be an immediate consequence
of the stochastic Fubini theorem
(see, e.g., \cite[Theorem 3.3]{NeeWei05a}).
\end{proof}

In the same way one sees that for $t\geq 0$ the stochastic integrals
$\int_{0}^{t} S(-s)f\,dw(s)$ are well-defined. Because the process $t\mapsto
\int_{0}^{t} S(-s)f\,dw(s)$ is a martingale having a continuous version by
Doob's maximal inequality, we also know that the convolution process
$$U(t):=\int_{0}^{t} S(t-s)f \,dw(s)=S(t)\int_{0}^{t} S(-s)f \,dw(s)$$ has a
continuous version. However, as we shall see, the splitting scheme for $U$ fails
to converge.\par
For $n\ge 1$ define
$$\begin{aligned}
S^{(n)}&:= \sum_{k=1}^{n} 1_{(\frac{k-1}{n},\frac{k}{n}]}
\otimes S\Big(\frac{k}{n}\Big).
\end{aligned}$$
Observe that for any $s,t\in\mathbb{R}$
\begin{equation}\begin{aligned}\label{Snf}
(S^{(n)}(t)f)(s)&= \sum_{k=1}^{n} 1_{(\frac{k-1}{n},\frac{k}{n}]}(t)f\Big(
\frac{k}{n} + s \Big).
\end{aligned}\end{equation}
Similarly to the above one checks that
$$\begin{aligned}
\Big( \int_{0}^{1} S^{(n)}(t)f \,dw(t)\Big)(s) & =
\sum_{k=1}^{n}f\Big(\frac{k}{n}+s\Big)
\Big[w\Big(\frac{k}{n}\Big)-w\Big(\frac{k-1}{n}\Big)\Big]
\end{aligned}$$
for almost all $s\in\R$ almost surely.

The clue to this example is that for $n$ fixed and $s\in (0,2^{-un}]$ the
function $t\mapsto (S^{(2^n)}(t)f)(s)$ always `picks up' the values of $f$ at
the left parts of the dyadic intervals where $f$ is defined to be non-zero. Thus
for these values of $s$ the function $t\mapsto (S^{(2^n)}(t)f)(s)$ it is nowhere
zero and its stochastic integral blows up as $n\rightarrow \infty$. We shall
make this precise. Our aim is to prove that for certain values of $q>2$ (to be
determined later on) one has
\begin{equation}\begin{aligned}\label{explode}
\E \Big\Vert \int_{0}^{1} S^{(2^n)} (t) f \,dw(t)
\Big\Vert^p_{L^q(\mathbb{R};\ell^p)} \rightarrow \infty \quad \textrm{as
}n\rightarrow \infty.
\end{aligned}\end{equation}
\par
By Minkowski's inequality we have, for any $n\ge 1$ and $q\geq p$,
$$\begin{aligned}
&\Big[\E \Big\Vert \int_{0}^{1} S^{(2^n)} (t) f \,dw(t)
\Big\Vert^p_{L^q(\mathbb{R};\ell^p)}\Big]^{\inv{p}} \\
& \qquad \geq \Big[\int_\mathbb{R}\Big( \E \Big\Vert \Big(\int_{0}^{1}
S^{(2^n)}(t)f\, dw(t)\Big)(s)\Big\Vert_{\ell^p}^p \Big)^{\frac{q}{p}}
\,ds\Big]^{\inv{q}}.
\end{aligned}$$
Now fix $n\ge 1$. For any $1\leq k \leq n$ and any $j=0,\ldots,2^{k-1}-1$ there
exists a unique $1\leq i_{k,j}\leq 2^n-1$ such that
$\frac{i_{k,j}}{2^n}=\frac{2j+1}{2^k}$. Now observe that by definition of $f$
one has for $s\in(0,2^{-un}]$ that
$$\begin{aligned}
\Big\lb
f\big(\frac{i_{k,j}}{2^n}+s\big),e^*_{2^{k-1}+j}\Big\rb=2^{-k\frac{r}{p}}.
\end{aligned}$$
Using this and representation \eqref{Snf} one obtains that for
$s\in(0,2^{-un}]$, $1\leq k \leq n$, $j=0,\ldots,2^{k-1}-1$, and any $t\in
\big(\frac{i_{k,j}-1}{2^n},\frac{i_{k,j}}{2^{n}}\big]=:I_{k,j}^{n}$,
\begin{equation}\begin{aligned}\label{Snfl}
\big\lb{(S^{(2^n)}(t)f)(s)},e^*_{2^{k-1}+j}\rb=2^{-k\frac{r}{p}}.
\end{aligned}\end{equation} To prove
\eqref{explode}, we now estimate 
$$\begin{aligned}
&\int_{\mathbb{R}}\Big( \E \Big\Vert \Big(\int_{0}^{1} S^{(2^n)}(t)f\,
dw(t)\Big)(s)\Big\Vert_{\ell^p}^p \Big)^{\frac{q}{p}} \,ds\\
& \qquad \geq \int_{0}^{2^{-un}}\Big( \sum_{k=1}^{n}\sum_{j=0}^{2^{k-1}-1} \E
\Big| \int_{0}^{1} \big\lb{(S^{(2^n)}(t)f)(s)},{e^*_{2^{k-1}+j}}\big\rb
\,dw(t)\Big|^p  \Big)^{\frac{q}{p}} \,ds\\
& \qquad \geq \int_{0}^{2^{-un}}\Big( \sum_{k=1}^{n} \sum_{j=0}^{2^{k-1}-1}
2^{-kr}\E|\Delta w_{I_{k,j}^n}|^p  \Big)^{\frac{q}{p}} \,ds\\
& \qquad = \int_{0}^{2^{-un}}\Big( \sum_{k=1}^{n} 2^{k-1}
2^{-kr}2^{-n\frac{p}{2}}\E|\gamma|^p  \Big)^{\frac{q}{p}} \,ds 
\\ & \qquad \geq 2^{-un-1}2^{n(1-r-\frac{p}{2})\frac{q}{p}}
(\E|\gamma|^p)^{\frac{q}{p}},
\end{aligned}$$
where in the second inequality we use \eqref{Snfl} 
and $\gamma$ denotes a standard Gaussian random variable. Thus if
$-u+(1-r-\frac{p}{2})\frac{q}{p}>0$, that is, if $q>up/(1-r-\frac{p}{2})$
(recall that
$r<1-\frac{p}{2}$), the left-hand side expression diverges as $n\rightarrow
\infty$.

\section*{Acknowledgments}
We thank Ben Goldys, Arnulf Jentzen, Markus Kunze, and
Mark Veraar for helpful comments and for suggesting several improvements. We
also thank the anonymous referee for pointing out various references.

\end{document}